\tiny\color{gray},  
\newmdtheoremenv[%
backgroundcolor=green!10,%
outerlinecolor=black,%
leftmargin=0,%
rightmargin=0,
innertopmargin =3pt,%
innerleftmargin = 5pt,
innerrightmargin = 5pt,
splittopskip = \topskip,%
skipabove = \baselineskip,%
skipbelow = \baselineskip,%
roundcorner=5, ntheorem]
{theorem}{Theorem}[section]
\newtheorem{corollary}{Corollary}[section]
\newtheorem{lemma}{Lemma}[section]
\newtheorem{remark}{Remark}[section]
\newenvironment{proof}{{\noindent\it Proof.}\quad}{\hfill $\square$\\}
\begin{document}
\title{Marcinkiewicz--Zygmund inequalities for scattered data on polygons}
\author{Hao-Ning Wu\footnotemark[2]}

\renewcommand{\thefootnote}{\fnsymbol{footnote}}
\footnotetext[2]{Department of Mathematics, University of Georgia, Athens, GA 30602, USA (\href{hnwu@uga.edu}{hnwu@uga.edu}, \href{https://haoningwu.github.io/}{haoningwu.github.io)}}

\maketitle

\begin{abstract}

Given a set of scattered points on a regular or irregular 2D polygon, we aim to employ them as quadrature points to construct a quadrature rule that establishes Marcinkiewicz--Zygmund inequalities on this polygon. The quadrature construction is aided by Bernstein--B\'{e}zier polynomials. For this purpose, we first propose a quadrature rule on triangles with an arbitrary degree of exactness and establish Marcinkiewicz--Zygmund estimates for 3-, 10-, and 21-point quadrature rules on triangles. Based on the 3-point quadrature rule on triangles, we then propose the desired quadrature rule on the polygon that satisfies Marcinkiewicz--Zygmund inequalities for $1\leq p \leq \infty$. As a byproduct, we provide error analysis for both quadrature rules on triangles and polygons. Numerical results further validate our construction.
\end{abstract}

\textbf{Keywords: }{Marcinkiewicz--Zygmund inequalities, Bernstein--B\'{e}zier polynomials, quadrature, interpolation, scattered points, triangles, polygons}

\textbf{AMS subject classifications.} 41A17, 41A05, 65D32, 42C15

\section{Introduction}

Let $N\geq 1$ be an integer and $\chi$ a trigonometric polynomial of order at most $N$. The classical Marcinkiewicz--Zygmund inequality (see, e.g., \cite[Chapter X]{zygmund1977trigonometric}) states that 
\begin{equation*}
\int_0^{2 \pi}|\chi(x)|^p dx \leq \frac{A_p}{2 N+1} \sum_{k=0}^{2 N}\left|\chi\left(\frac{2 k \pi}{2 N+1}\right)\right|^p \leq A A_p \int_0^{2 \pi}|\chi(x)|^p d x
\end{equation*}
for $1<p<\infty$, where $A>0$ is a generic constant and $A_p>0$ is a constant depending only on $p$. The second inequality holds for $p=1$ and $p=\infty$ as well. Moreover, the first inequality is also valid for $p=1$ and $p=\infty$ if the summation includes more points than $2N+1$. These types of inequalities are of great practical value, as they ease the restrictive assumption of quadrature exactness in traditional numerical analysis. For instance, Marcinkiewicz--Zygmund inequalities have been applied to hyperinterpolation, originally proposed by Sloan in \cite{sloan1995polynomial} as a discretization of the $L^2$ orthogonal projection, to bypass the quadrature exactness assumption in its construction, see, e.g., \cite{an2022quadrature,an2024bypassing,an2024hyperinterpolation}. Recently, they have also been used to relax quadrature exactness requirements in classical numerical methods for partial differential equations \cite{wu2023breaking} and integral equations \cite{an2024spherical}.

Since the pioneering work \cite{Marcinkiewicz1937} of Marcinkiewicz and Zygmund, there has been extensive research on Marcinkiewicz--Zygmund inequalities across various domains. In one dimension, these inequalities have been studied on the torus (see, e.g., \cite{zbMATH00539969}), real lines (see, e.g., \cite{zbMATH01150002,zbMATH01398363}), and unit circles (see, e.g., \cite{zbMATH01289969}). It was followed by significant advancements in understanding Marcinkiewicz--Zygmund inequalities on compact manifolds, including spheres (see, e.g., \cite{zbMATH05558973,zbMATH07844329,zbMATH05206088,mhaskar2001spherical}) and general compact manifolds (see, e.g., \cite{zbMATH05795114,filbir2011marcinkiewicz,zbMATH07227728,zbMATH05271081}). Compared to the extensive literature on compact manifolds, the exploration of Marcinkiewicz--Zygmund inequalities on multivariate compact domains in Euclidean spaces remains relatively limited. Notable studies include results on spherical caps \cite{zbMATH05676005} and initial findings in several multivariate domains \cite{MR3746524}.

In this paper, we are driven by practical considerations on 2D polygons. Let $\Omega \subset \mathbb{R}^2$ be a regular or irregular 2D polygon, with its boundary $\partial \Omega$ consisting of piecewise line segments. Given a set of scattered points ${(x_j, y_j)}_{i=1}^m$ on $\Omega$ with vertices of $\Omega$ included, we are motivated to construct a quadrature rule using these scattered points as quadrature points and investigate the related Marcinkiewicz--Zygmund inequalities. This construction immediately links the extensive theory of Marcinkiewicz--Zygmund inequalities to practice. An analogue of our work is \cite{zbMATH07844329}, in which Marcinkiewicz--Zygmund inequalities for scattered data on spheres have been established.

Our approach involves triangulating $\Omega$ into triangles with these scattered points as vertices. We first focus on a single triangle, proposing a quadrature rule in its domain points and investigating the Marcinkiewicz--Zygmund inequalities on it. We note that for this specific quadrature rule on triangles, Marcinkiewicz--Zygmund inequalities may not be achievable due to the sharpness of the Markov inequality on single triangles. We then extend the 3-point quadrature rule from triangles to the triangulation of $\Omega$. For the proposed quadrature rule on polygons, we derive Marcinkiewicz--Zygmund inequalities for $1\leq p\leq\infty$. The reason why we can establish Marcinkiewicz--Zygmund inequalities on polygons, unlike the quadrature in domain points of a single triangle, is that we can use the coarsest triangulation with only the vertices of the polygon itself. This allows us to derive a Markov inequality on polygons with constants that depend only on the boundary of $\Omega$, rather than on individual triangles. As a corollary, given a set of scattered points (rather than domain points) on a single triangle with its vertices included, we can also establish Marcinkiewicz--Zygmund inequalities on it.

In Section \ref{sec:preliminaires} we collect some concepts in triangulation and Bernstein--B\'{e}zier polynomials, which are essential for our quadrature construction. Section \ref{sec:triangle} focuses on triangles, and we extend these results to polygons in Section \ref{sec:polygons}. Numerical results in Section \ref{sec:numerical} confirm the accuracy of the proposed quadrature rules for both triangles and polygons.

\section{Preliminaries}\label{sec:preliminaires}

Given any domain $\Omega$ in $\mathbb{R}^2$, we define the usual $L^{\infty}$ norm of a function $f$ by
$$
\|f\|_{\infty,\Omega}:=\textrm{ess} \sup _{x \in \Omega}|f(x)|.
$$
If $f$ is continuous on $\Omega$, we can replace the essential supremum by the maximum. For $1 \leq p<\infty$, we define the usual $L^p$ norm by
$$
\|f\|_{p, \Omega}:=\left[\int_{\Omega}|f(x)|^p d x\right]^{1 / p}.
$$
For $1\leq p\leq \infty$ and $d\geq 0$, we also define the Sobolev space 
$W^{d+1,p}(\Omega):=\left\{f\in L^p(\Omega):\|f\|_{d+1, p, \Omega}<\infty\right\}$, where the Sobolev norm is defined as 
\begin{equation*}
\|f\|_{d+1, p, \Omega}:=
\begin{cases}
\left[\sum_{k=0}^{d+1}|f|_{k, p, \Omega}^p\right]^{1 / p},\quad &1 \leq p<\infty, \\
\sum_{k=0}^{d+1}|f|_{k, \infty, \Omega}, &p=\infty
\end{cases}
\end{equation*}
with the Sobolev semi-norm defined as
\begin{equation*}
|f|_{d+1, p, \Omega}:=
\begin{cases}
\left[\sum\limits_{\nu+\mu=k}\left\|D_x^\nu D_y^\mu f\right\|_{p, \Omega}^p\right]^{1 / p}, & 1 \leq p<\infty, \\
\max\limits_{\nu+\mu=k}\left\|D_x^\nu D_y^\mu f\right\|_{\infty, \Omega}, & p=\infty.
\end{cases}
\end{equation*}
We denote by $\|\cdot\|_p$ $(1\leq p\leq \infty)$ the usual $\ell_p$ norm of vectors and matrices.

In the following, we gather key results in Bernstein--B\'{e}zier polynomials. All definitions, properties, and results provided here can be found in \cite{laischumakr}.

\textbf{Triangulation.} Given a triangle $T$, we let $|T|$ denote the length of the longest edge of $T$ and $\rho_T$ the radius of the largest disk that can be inscribed circle in $T$. The shape parameter for this triangle, denoted by $\kappa_T$, is defined as
\begin{equation*}
\kappa_T:=\frac{|T|}{\rho_T}.
\end{equation*}
We also denote by $A_T$ the area of the triangle $T$.

For any polygonal domain $\Omega\in \mathbb{R}^2$, let the collection $\triangle=\{T_1,\ldots,T_n\}$ of triangles be a \emph{triangulation} of $\Omega$. This means $\Omega = \bigcup_{i=1}^nT_n$ and any pair of triangles in $\triangle$ that intersect do so at either a common vertex or a common edge. The longest edge length in the triangulation $\triangle$, denoted by $|\triangle|$, is known as the size of the 
triangulation $\triangle$.

For functions on triangles $T$ and polygons $\Omega = \bigcup_{T\in\triangle}T$, we have the mesh-dependent relation:
\begin{equation}\label{equ:meshdependent}
\left\|D_x^\alpha D_y^\beta f\right\|_{p, \Omega}^p=\sum_{T \in \triangle}\left\|D_x^\alpha D_y^\beta f\right\|_{p, T}^p.
\end{equation}

\textbf{Bernstein--B\'{e}zier polynomials.} For a triangle $T\in \triangle$, denote by $\{(x_1,y_1),(x_2,y_2),(x_3,y_3)\}$ its vertices. For any point $(x,y)\in\Omega$, let $b_1,b_2,b_3$ be the solution of the system of linear equations
\begin{equation}\label{equ:bary}
\begin{split}
&x=b_1 x_1+b_2 x_2+b_3 x_3, \\ 
&y=b_1 y_1+b_2 y_2+b_3 y_3, \\ 
&1=b_1+b_2+b_3.
\end{split}
\end{equation}
Then $\left(b_1, b_2, b_3\right)$ is said to be the barycentric coordinates of $(x, y)$ with respect to $T$. For each $i=1,2,3$, $b_i(x, y) \geq 0$ for all $(x, y) \in T$, $b_i(x_i,y_i) = 1$, and $b_i(x,y)$ vanishes at all points on the edge of $T$ opposite to $(x_i,y_i)$. At times we denote by $v=(x,y)\in\Omega$ or $\xi\in\Omega$ a point in $\Omega$.

We use the barycentric coordinates to define the Bernstein--B\'{e}zier polynomials of degree $d$ in the form of 
\begin{equation}\label{equ:BB}
B_{i j k}^T(x, y)=\frac{d!}{i!j!k!} b_1^i b_2^j b_3^k\quad \text{for}\quad i+j+k=d.
\end{equation}
Note that the superscript $T$ denotes the dependence on the triangle $T$ rather than ``transpose.'' These polynomials satisfy 
\begin{equation}\label{equ:Brange}
0 \leq B_{i j k}^T(x,y) \leq 1\quad\text{for all}\quad(x,y)\in T
\end{equation}
and
\begin{equation}\label{equ:Bsum}
\sum_{i+j+k=d} B_{i j k}^T(x,y) \equiv 1 \quad\text{for all}\quad(x,y)\in \mathbb{R}^2.
\end{equation}
Moreover, the integral of a Bernstein--B\'{e}zier polynomial over the triangle $T$ can be evaluated in a closed form of
\begin{equation}\label{equ:Bint}
\int_T B_{i j k}^T(x, y) d x d y=\dfrac{A_T}{\binom{d+2}{2}},\quad i+j+k = d.
\end{equation}

\textbf{Polynomials and interpolation.} For Bernstein--B\'{e}zier polynomials of degree $d$, they form a basis for the space $\mathbb{P}_d$ of polynomials of (total) degree $d$. That is, for any polynomial $P \in \mathbb{P}_d$, we can write
\begin{equation}\label{equ:Bform}
P(x, y)=\sum_{i+j+k=d} c_{i j k} B_{i j k}^T(x, y).
\end{equation}
Note that the cardinality of the index set $\{(i, j, k), i+j+k=d\}$ is $\dim\mathbb{P}_d:=(d+1)(d+2) / 2$, which is the dimension of $\mathbb{P}_d$. Let $\bm{c}\in\mathbb{R}^{\dim\mathbb{P}_d}$ be the vector of coefficients $\{c_{ijk}\}_{i+j+k=d}$ in lexicographical order. This B-form representation \eqref{equ:Bform} of polynomials is stable in the following sense:
\begin{lemma}[Norm equivalence {\cite[Theorems 2.6 and 2.7]{laischumakr}}]
\label{lem:normequivalence}
Let $P$ be a polynomial in the B-form \eqref{equ:Bform} with coefficient vector $\bm{c}$. Then
$$
\frac{\|\bm{c}\|_{\infty}}{K_0} \leq\|P\|_{\infty,T} \leq\|\bm{c}\|_{\infty}
$$
and
$$
\frac{A_T^{1 / p}}{K_0}\|\bm{c}\|_p \leq\|P\|_{p, T} \leq A_T^{1 / p}\|\bm{c}\|_p
$$
for $1\leq p<\infty$, where $K_0>0$ is a constant depending only on $d$, and $K_0 = 1$ when $p=\infty$ and $d=1$.
\end{lemma}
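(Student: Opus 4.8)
The plan is to treat the two inequalities in each bound separately, obtaining the upper bounds directly from the partition-of-unity structure of the Bernstein--B\'ezier basis and the lower bounds from a reduction to a fixed reference triangle.

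For the upper bounds I would argue as follows. Writing $P=\sum_{i+j+k=d}c_{ijk}B_{ijk}^T$ and using \eqref{equ:Brange} together with the partition of unity \eqref{equ:Bsum}, the triangle inequality gives $|P(x,y)|\le\sum_{i+j+k=d}|c_{ijk}|B_{ijk}^T(x,y)\le\|\bm c\|_\infty\sum_{i+j+k=d}B_{ijk}^T(x,y)=\|\bm c\|_\infty$ for every $(x,y)\in T$, which is the $L^\infty$ upper bound. For $1\le p<\infty$, since the $B_{ijk}^T$ are nonnegative and sum to $1$, they define a convex combination, so Jensen's inequality applied to the convex map $t\mapsto|t|^p$ yields $|P(x,y)|^p\le\sum_{i+j+k=d}|c_{ijk}|^pB_{ijk}^T(x,y)$. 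Integrating over $T$ and invoking the closed form \eqref{equ:Bint} gives $\|P\|_{p,T}^p\le \frac{A_T}{\binom{d+2}{2}}\|\bm c\|_p^p\le A_T\|\bm c\|_p^p$, which is the claimed $L^p$ upper bound.

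The lower bounds are where the constant $K_0$ enters, and the key device is affine invariance. Fixing a reference triangle $\hat T$ and the affine bijection $F\colon\hat T\to T$ that matches vertices in order, the system \eqref{equ:bary} shows that barycentric coordinates are preserved, hence $B_{ijk}^T\circ F=B_{ijk}^{\hat T}$ and $\hat P:=P\circ F=\sum_{i+j+k=d}c_{ijk}B_{ijk}^{\hat T}$ carries the same coefficient vector $\bm c$. Consequently $\|P\|_{\infty,T}=\|\hat P\|_{\infty,\hat T}$, while the change of variables $dx\,dy=(A_T/A_{\hat T})\,d\hat x\,d\hat y$ gives $\|P\|_{p,T}^p=(A_T/A_{\hat T})\|\hat P\|_{p,\hat T}^p$. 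This isolates all triangle dependence into the factor $A_T^{1/p}$ and reduces both lower bounds to estimates on the fixed triangle $\hat T$, on which the linear map $\bm c\mapsto\hat P$ is an isomorphism of $\mathbb R^{\dim\mathbb P_d}$ onto $\mathbb P_d$. For the $L^\infty$ bound, equivalence of norms on this finite-dimensional space produces a constant $K_0$, depending only on $d$ through the fixed $\hat T$, with $\|\bm c\|_\infty\le K_0\|\hat P\|_{\infty,\hat T}$.

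The main obstacle is the $L^p$ lower bound, because the naive norm-equivalence constant on $\hat T$ depends on $p$, whereas the statement demands a single $K_0=K_0(d)$ valid for all $p$. To remove the $p$-dependence I would use duality: since the Bernstein polynomials are linearly independent, their Gram matrix over $\hat T$ is invertible, so there is an $L^2(\hat T)$-dual basis $\{g_{ijk}\}\subset\mathbb P_d$ with $\int_{\hat T}g_{ijk}B_{i'j'k'}^{\hat T}\,d\hat x\,d\hat y=\delta$ and hence $c_{ijk}=\int_{\hat T}\hat P\,g_{ijk}\,d\hat x\,d\hat y$. H\"older's inequality with exponents $p,q$ then gives $|c_{ijk}|\le\|\hat P\|_{p,\hat T}\|g_{ijk}\|_{q,\hat T}\le A_{\hat T}^{1/q}\bigl(\max_{ijk}\|g_{ijk}\|_{\infty,\hat T}\bigr)\|\hat P\|_{p,\hat T}$. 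Summing the $p$-th powers over the $\dim\mathbb P_d$ indices and using $1/p+1/q=1$, the powers of $A_{\hat T}$ combine into a single power of the fixed area while the counting factor satisfies $(\dim\mathbb P_d)^{1/p}\le\dim\mathbb P_d$; after undoing the affine scaling this yields $\|\bm c\|_p\le K_0\,A_T^{-1/p}\|P\|_{p,T}$ with a constant depending only on $d$. Finally, for the special case $d=1$ the coefficients are exactly the vertex values $c_i=P(x_i,y_i)$ (the degree-one Bernstein polynomials being the barycentric coordinates), so $\|\bm c\|_\infty\le\|P\|_{\infty,T}$ holds directly and $K_0=1$, consistent with the claim.
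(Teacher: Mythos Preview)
The paper does not supply a proof of this lemma; it is quoted directly from \cite[Theorems 2.6 and 2.7]{laischumakr} and used as a black box throughout. Your argument is therefore not competing with anything in the paper itself, and it stands on its own as a correct proof. The upper bounds via the partition of unity \eqref{equ:Bsum} and Jensen's inequality are the standard route, and your affine reduction to a fixed reference triangle correctly isolates the $A_T^{1/p}$ scaling while pushing the remaining constant onto a space that depends only on $d$. The dual-basis device for the $L^p$ lower bound is clean and, crucially, extracts a $p$-independent constant by bounding $\|g_{ijk}\|_{q,\hat T}\le A_{\hat T}^{1/q}\|g_{ijk}\|_{\infty,\hat T}$ and $(\dim\mathbb P_d)^{1/p}\le\dim\mathbb P_d$; this is exactly the care the statement demands, since a naive finite-dimensional norm-equivalence argument would leave a $p$-dependent constant. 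The verification that $K_0=1$ when $d=1$ and $p=\infty$ is also correct, as the degree-one Bernstein basis coincides with the barycentric coordinates and the coefficients are the vertex values of $P$.
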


For any $d \geq 1$, given values $\{f\left(\xi_{i j k}\right)\}_{i+j+k=d}$, where 
\begin{equation}\label{equ:domainpts}
\xi_{i j k}=\frac{i (x_1,y_1)+j (x_2,y_2)+k (x_3,y_3)}{d},\quad i+j+k=d,
\end{equation}
are domain points of degree $d$ over $T$, there exists a unique polynomial $P_f$ of degree $d$ satisfying
\begin{equation}\label{equ:interp}
P_f\left(\xi_{i j k}\right)=\sum_{l+m+n=d} c_{l m n} B_{l m n}\left(\xi_{i j k}\right)=f\left(\xi_{i j k}\right),\quad i+j+k=d.
\end{equation}
The theory of interpolation on $T$ has been well-established. We have the following special result that is sufficient for our analysis.
\begin{lemma}[Interpolation error \cite{zbMATH01234311}]\label{lem:main}
Given $f\in W^{d+1,p}(T)$ for $d\geq 1$ and $1\leq p\leq \infty$, there exists an interpolatory polynomial $P_f$ such that
\begin{equation*}
\left\|f-P_f\right\|_{p, T}\leq K_1|T|^{d+1}|f|_{d+1, p, T},
\end{equation*}
where $K_1>0$ depends only on $d$ if $p =\infty$, and $K_1>0$ depends on $d$ and $\kappa_T$ if $1\leq p<\infty$. 
\end{lemma}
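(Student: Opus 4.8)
My plan is to establish the estimate on a fixed reference triangle and then transport it to an arbitrary $T$ by affine equivalence, keeping track of how every norm rescales. Fix $\hat T$ with vertices $(0,0),(1,0),(0,1)$, let $F(\hat v)=B\hat v+b$ be the affine bijection with $F(\hat T)=T$, and write $\hat g:=g\circ F$. Then $|\det B|=A_T/A_{\hat T}$ and $\|B\|_2\le |T|/\hat\rho$, where $\hat\rho$ is the inradius of $\hat T$; the chain rule gives the pullback bounds $\|g\|_{p,T}=|\det B|^{1/p}\|\hat g\|_{p,\hat T}$ and $|\hat g|_{d+1,p,\hat T}\le C\|B\|_2^{\,d+1}|\det B|^{-1/p}|g|_{d+1,p,T}$ for $1\le p<\infty$, with the determinant factors dropping out when $p=\infty$ and $C=C(d)$.

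The key structural fact is that nodal interpolation commutes with $F$: the domain points \eqref{equ:domainpts} of $T$ are the $F$-images of those of $\hat T$, and the Bernstein--B\'ezier basis \eqref{equ:BB} is affine invariant, so $\widehat{P_f}=\hat P\hat f$, where $\hat P$ interpolates at the domain points of $\hat T$. Consequently $f-P_f=(\hat f-\hat P\hat f)\circ F$, and it suffices to bound $\hat f-\hat P\hat f$ on $\hat T$.

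On $\hat T$ I would first bound $\hat P$. Expressing $\hat P\hat f$ in B-form with coefficients $\hat{\bm c}$, Lemma \ref{lem:normequivalence} gives $\|\hat P\hat f\|_{p,\hat T}\le A_{\hat T}^{1/p}\|\hat{\bm c}\|_p\le C\|\hat{\bm c}\|_\infty$; since $\hat{\bm c}$ depends on the nodal values $\{\hat f(\hat\xi_{ijk})\}$ through a fixed linear map and $\|\hat{\bm c}\|_\infty\le C\|\hat f\|_{\infty,\hat T}$, the embedding $W^{d+1,p}(\hat T)\hookrightarrow C(\hat T)$ yields $\|\hat P\hat f\|_{p,\hat T}\le C\|\hat f\|_{d+1,p,\hat T}$. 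As $\hat P$ reproduces every element of $\mathbb P_d$ by \eqref{equ:interp}, so that $I-\hat P$ annihilates $\mathbb P_d$, the Bramble--Hilbert lemma upgrades this to $\|\hat f-\hat P\hat f\|_{p,\hat T}\le C|\hat f|_{d+1,p,\hat T}$, where $C$ depends only on $d$ (the finite measure of $\hat T$ lets one take $C$ uniform in $p$).

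Feeding this into the pullback relations, the determinant factors cancel and I obtain $\|f-P_f\|_{p,T}\le C\|B\|_2^{\,d+1}|f|_{d+1,p,T}\le C|T|^{d+1}|f|_{d+1,p,T}$. The sup-norm case is the cleanest: no determinant enters and the reference constant depends only on $d$, giving $K_1=K_1(d)$. For $1\le p<\infty$ one may instead argue directly on $T$, applying Bramble--Hilbert and the embedding on $T$ itself; their constants then scale with the shape of $T$, which is exactly where the dependence on $\kappa_T$ in the statement originates. I expect the main obstacle to be the boundedness of $\hat P$ on $W^{d+1,p}$: point evaluations must be admissible, which for $d\ge1$ reduces to the Sobolev embedding into $C(\hat T)$ and requires genuine care only in the borderline case $d=1,\ p=1$, where the critical two-dimensional embedding $W^{2,1}\hookrightarrow C$ must be invoked. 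The remaining affine bookkeeping is routine but must be tallied exactly to land on the power $|T|^{d+1}$.
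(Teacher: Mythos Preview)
The paper does not prove this lemma at all: it is stated in the preliminaries with a citation to \cite{zbMATH01234311} and used as a black box thereafter. So there is no ``paper's own proof'' to compare against.

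Your proposal is the standard Ciarlet-style argument (affine pullback to a reference triangle, boundedness of the nodal interpolant via Sobolev embedding, Bramble--Hilbert to pass from the full norm to the top seminorm, then scale back), and it is correct. The affine invariance of barycentric coordinates indeed makes $\widehat{P_f}=\hat P\hat f$, and the borderline embedding $W^{2,1}(\hat T)\hookrightarrow C(\bar{\hat T})$ in dimension two does hold, so the case $d=1$, $p=1$ goes through. One remark: as you have set it up, the determinant factors cancel and only $\|B\|_2^{d+1}\le C(d)|T|^{d+1}$ survives, so your argument actually delivers a constant depending only on $d$ for \emph{every} $1\le p\le\infty$, which is slightly sharper than the stated dependence on $\kappa_T$ for $p<\infty$. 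Your final paragraph suggesting one ``argue directly on $T$'' to explain the $\kappa_T$ dependence is therefore unnecessary and a bit misleading; the affine route already gives the result uniformly in shape, and the $\kappa_T$ in the lemma is presumably an artifact of the particular proof in the cited reference rather than something intrinsic.
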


We also need a Markov-type inequality for polynomials on triangles.

\begin{lemma}[Markov {\cite[Theorems 2.32]{laischumakr}}]\label{lem:markov}
There exists a constant $K_2$ depending only on $d$ such that for all polynomials $P \in \mathbb{P}_d$,
\begin{equation}\label{equ:Markov}
\left\|D_x^\alpha D_y^\beta P\right\|_{p, T} \leq \frac{K_2}{\rho_T^{\alpha+\beta}}\|P\|_{p, T}, \quad 0 \leq \alpha+\beta \leq d,
\end{equation}
for all $1 \leq p \leq \infty$, where $K_2 = K_0(2d)^{\alpha+\beta}$.
\end{lemma}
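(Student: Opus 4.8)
The plan is to push the entire problem down to the level of B-form coefficient vectors, where a directional derivative acts as a simple linear difference operator, and then to invoke the norm equivalence of Lemma \ref{lem:normequivalence} to transfer the estimate between a polynomial and its coefficients. Since $D_x^\alpha D_y^\beta$ is a composition of $\alpha+\beta$ first-order directional derivatives, I would first treat a single derivative $D_u$ in an arbitrary unit direction $u$ and then iterate, taking advantage of the fact that $D_x$ and $D_y$ commute.

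First I would establish the differentiation formula. Writing $P=\sum_{i+j+k=d}c_{ijk}B_{ijk}^T$ and using $B_{ijk}^T=\frac{d!}{i!j!k!}b_1^ib_2^jb_3^k$ together with the chain rule, I would show
\begin{equation*}
D_u B_{ijk}^T = d\bigl(a_1 B_{i-1,j,k}^{T,d-1} + a_2 B_{i,j-1,k}^{T,d-1} + a_3 B_{i,j,k-1}^{T,d-1}\bigr), \qquad a_m := D_u b_m,
\end{equation*}
where $B^{T,d-1}$ denotes the Bernstein--B\'ezier polynomials of degree $d-1$ and the $a_m$ are \emph{constants}, since each barycentric coordinate $b_m$ is affine. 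Consequently $D_u P$ is itself in B-form of degree $d-1$, with a coefficient vector $\tilde{\bm c}$ whose entries are $\tilde c_\mu = d(a_1 c_{\mu+e_1}+a_2 c_{\mu+e_2}+a_3 c_{\mu+e_3})$ for $|\mu|=d-1$, each $e_m$ denoting a unit increment in the $m$-th index.

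The geometric heart of the argument is to control the constants $a_m$, and this is the step I expect to be the main obstacle, since it is where $\rho_T$ must be made to appear. Because $b_m$ is the affine function equal to $1$ at vertex $m$ and $0$ on the opposite edge, its gradient is constant with $\|\nabla b_m\|=\ell_m/(2A_T)$, where $\ell_m$ is the length of the edge opposite vertex $m$. Hence for a unit direction $u$ one has $|a_m|=|u\cdot\nabla b_m|\le \ell_m/(2A_T)$, and summing over $m$ gives $\sum_m|a_m|\le(\ell_1+\ell_2+\ell_3)/(2A_T)=1/\rho_T$, using the identity $\rho_T=2A_T/(\ell_1+\ell_2+\ell_3)$ (area equals inradius times semiperimeter). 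Since each index shift $\mu\mapsto\mu+e_m$ is injective into the degree-$d$ index set, the triangle inequality then yields $\|\tilde{\bm c}\|_p\le d\,\bigl(\sum_m|a_m|\bigr)\|\bm c\|_p\le(d/\rho_T)\|\bm c\|_p$ uniformly in $1\le p\le\infty$.

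Finally I would assemble the bound. For $p=\infty$, the upper half of Lemma \ref{lem:normequivalence} gives $\|D_u P\|_{\infty,T}\le\|\tilde{\bm c}\|_\infty\le(d/\rho_T)\|\bm c\|_\infty\le(K_0 d/\rho_T)\|P\|_{\infty,T}$; for $1\le p<\infty$ the two factors $A_T^{1/p}$ from the $L^p$ equivalences applied at degrees $d-1$ and $d$ cancel, leaving the same bound $\|D_u P\|_{p,T}\le(K_0 d/\rho_T)\|P\|_{p,T}$. Crucially only one factor $K_0$ survives, because the upper bound in the equivalence carries multiplicative constant $1$. Iterating over the $\alpha+\beta$ commuting first-order derivatives purely at the coefficient level, the accumulated factor is $d(d-1)\cdots(d-\alpha-\beta+1)/\rho_T^{\alpha+\beta}$, and converting back to function norms only once at each end contributes a single $K_0$. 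Bounding $d(d-1)\cdots(d-\alpha-\beta+1)\le d^{\alpha+\beta}\le(2d)^{\alpha+\beta}$ then delivers $K_2=K_0(2d)^{\alpha+\beta}$, as claimed (the clean factor $2d$ being a convenient overestimate of the sharper $d$ per derivative).
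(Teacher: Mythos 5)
Your proof is correct. Note, however, that the paper never proves this lemma itself: it is imported verbatim from the cited reference (Lai and Schumaker, Theorem 2.32), so there is no in-paper argument to compare against, and what you have written is in essence a self-contained reconstruction of that textbook proof. The three ingredients you use are exactly the standard ones: the B-form differentiation formula, giving the coefficient-level operator $\tilde c_\mu = d(a_1c_{\mu+e_1}+a_2c_{\mu+e_2}+a_3c_{\mu+e_3})$ with $a_m=D_ub_m$ constant; the geometric control $\|\nabla b_m\|=\ell_m/(2A_T)$ combined with the identity $A_T=\rho_T\cdot\tfrac{1}{2}(\ell_1+\ell_2+\ell_3)$, which yields $\sum_m|a_m|\le 1/\rho_T$; and the stability of the B-form (the paper's Lemma \ref{lem:normequivalence}, itself quoted from the same source) to pass between coefficient and function norms. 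All the delicate points check out: the shift maps $\mu\mapsto\mu+e_m$ are injective into the degree-$d$ index set, so each $S_m$ has $\ell_p$ operator norm at most $1$ and the per-derivative coefficient bound $d/\rho_T$ holds for all $1\le p\le\infty$; the factors $A_T^{1/p}$ cancel because the upper inequality in the norm equivalence carries constant $1$; and only a single factor $K_0$ (at degree $d$) enters, since the iteration stays at the coefficient level. Your bookkeeping in fact yields the sharper constant $K_0\,d(d-1)\cdots(d-\alpha-\beta+1)/\rho_T^{\alpha+\beta}\le K_0d^{\alpha+\beta}/\rho_T^{\alpha+\beta}$, which implies the stated bound with $K_2=K_0(2d)^{\alpha+\beta}$ a fortiori.
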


\section{Main results}

We first investigate Marcinkiewicz--Zygmund inequalities on triangles and then extend them to polygons.

\subsection{On triangles}\label{sec:triangle}

Let $\bm{B}\in\mathbb{R}^{\dim\mathbb{P}_d\times \dim{\mathbb{P}_d}}$ be the matrix of entries $B^T_{lmn}(\xi_{ijk})$, with $ijk$ being row index and $lmn$ column index, where $\{B^T_{lmn}\}$ are Bernstein--B\'{e}zier polynomials \eqref{equ:BB}, $\{\xi_{ijk}\}_{i+j+k=d}$ are domain points \eqref{equ:domainpts}, and the rows and columns of $\bm{B}$ are in lexicographical order, respectively. We are interested in quadrature rules in these domain points, that is, 
\begin{equation}\label{equ:quadT}
\sum_{i+j+k =d}w_{ijk}f(\xi_{ijk})\approx \int_Tf(x,y)dxdy
\end{equation}
with some quadrature weights $\{w_{ijk}\}_{i+j+k=d}$.

\begin{theorem}[Quadrature on triangles: exactness]\label{thm:quad}
Let $T$ be a triangle in $\mathbb{R}^2$ and $d\geq 1$ an integer. Let
\begin{equation}\label{equ:weightsT}
w_{ijk} := \frac{A_T}{\binom{d+2}{2}}\sigma_{ijk}(\bm{B}^{-1}),\quad i+j+k=d,
\end{equation}
where $\sigma_{ijk}(\bm{B}^{-1})$ is the $ijk$-th (lexicographical order) column sum of $\bm{B}^{-1}$. Then the quadrature rule \eqref{equ:quadT} in quadrature points \eqref{equ:domainpts} with quadrature weights \eqref{equ:weightsT} is exact for all polynomials of degree at most $d$, that is,
$$\sum_{i+j+k=d}w_{ijk}\chi(\xi_{ijk}) = \int_T\chi(x,y)dxdy\quad \forall \chi\in\mathbb{P}_d.$$
\end{theorem}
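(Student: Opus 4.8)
The plan is to reduce everything to linear algebra via the B-form representation and the closed-form integral \eqref{equ:Bint}. Since both sides of the claimed identity are linear in $\chi$, it suffices to verify the identity for an arbitrary $\chi\in\mathbb{P}_d$ written in B-form. So I would start by expanding $\chi=\sum_{l+m+n=d}c_{lmn}B^T_{lmn}$ as in \eqref{equ:Bform}, with coefficient vector $\bm{c}\in\mathbb{R}^{\dim\mathbb{P}_d}$ in lexicographical order, and then compute the two sides separately.

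First I would evaluate the right-hand side. By linearity of the integral together with the closed form \eqref{equ:Bint}, every basis polynomial integrates to the same value $A_T/\binom{d+2}{2}$, so
\begin{equation*}
\int_T\chi(x,y)\,dxdy=\frac{A_T}{\binom{d+2}{2}}\sum_{l+m+n=d}c_{lmn}=\frac{A_T}{\binom{d+2}{2}}\,\bm{1}^\top\bm{c},
\end{equation*}
where $\bm{1}$ denotes the all-ones vector. This is the target value of the quadrature sum.

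Next I would rewrite the left-hand side in matrix form. By the very definition of $\bm{B}$, whose $(ijk,lmn)$ entry is $B^T_{lmn}(\xi_{ijk})$, the vector of sampled values $\bm{f}=(\chi(\xi_{ijk}))_{i+j+k=d}$ satisfies $\bm{f}=\bm{B}\bm{c}$. The matrix $\bm{B}$ is invertible because the domain points \eqref{equ:domainpts} are unisolvent for $\mathbb{P}_d$, which is precisely what guarantees the unique interpolant in \eqref{equ:interp}. Observing that the column-sum functional $\sigma_{ijk}(\bm{B}^{-1})$ is exactly the $ijk$-th entry of the row vector $\bm{1}^\top\bm{B}^{-1}$, the weights \eqref{equ:weightsT} assemble into $\bm{w}^\top=\frac{A_T}{\binom{d+2}{2}}\bm{1}^\top\bm{B}^{-1}$, and hence
\begin{equation*}
\sum_{i+j+k=d}w_{ijk}\chi(\xi_{ijk})=\bm{w}^\top\bm{f}=\frac{A_T}{\binom{d+2}{2}}\bm{1}^\top\bm{B}^{-1}\bm{B}\bm{c}=\frac{A_T}{\binom{d+2}{2}}\bm{1}^\top\bm{c}.
\end{equation*}
Comparing this with the integral computed above finishes the proof.

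There is essentially no analytic obstacle here; the argument collapses to the one-line cancellation $\bm{B}^{-1}\bm{B}=\bm{I}$ once the pieces are arranged. The only point that needs care is bookkeeping: I must make sure that the column sums of $\bm{B}^{-1}$ pair with the sampled-value vector $\bm{f}=\bm{B}\bm{c}$ in the correct index order so that $\bm{1}^\top\bm{B}^{-1}\bm{B}=\bm{1}^\top$ genuinely applies, and I should record explicitly that the invertibility of $\bm{B}$ is the same unisolvence already invoked for interpolation. Conceptually, the weights \eqref{equ:weightsT} are designed so that each basis polynomial's quadrature value equals its exact integral, and exactness on all of $\mathbb{P}_d$ then follows purely by linearity.
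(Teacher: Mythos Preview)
Your proof is correct and follows essentially the same route as the paper: expand $\chi$ in the Bernstein--B\'ezier basis, use the closed-form integral \eqref{equ:Bint} so that $\int_T\chi=\frac{A_T}{\binom{d+2}{2}}\bm{1}^\top\bm{c}$, and then recognize the sampled values as $\bm{B}\bm{c}$ so that $\bm{w}^\top\bm{B}\bm{c}=\frac{A_T}{\binom{d+2}{2}}\bm{1}^\top\bm{c}$ via $\bm{B}^{-1}\bm{B}=\bm{I}$. The paper words the same argument in terms of the interpolant $P$ of $\chi$ (noting $P=\chi$) rather than starting directly from the B-form, but the linear-algebraic content is identical.
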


\begin{proof}
For any $\chi\in\mathbb{P}_d$, let $P$ be an interpolatory polynomial in $\mathbb{P}_d$ such that
\begin{equation}\label{equ:exactinterp}
\chi(\xi_{ijk}) = P(\xi_{ijk}) = \sum_{l+m+n=d} c_{lmn}B^T_{lmn}(\xi_{ijk}),\quad i+j+k=d.
\end{equation}
In fact we have $\chi(x,y) = P(x, y)$. Then we have
$$\int_T\chi(x,y)dxdy = \int_{T}P(x,y)dxdy = \sum_{l+m+n=d}c_{lmn}\int_TB^T_{lmn}(x,y)dxdy.$$
Let $\bm{x}\in\mathbb{R}^{\dim\mathbb{P}_d}$ be with entries $[\chi(\xi_{ijk})]_{i+j+k=d}$ in lexicographical order. The interpolation conditions \eqref{equ:exactinterp} are equivalent to a system of linear equations $\bm{B}\bm{c} = \bm{x}$. Thus we have 
$$\bm{c} = \bm{B}^{-1}\bm{x},$$
where the invertibility of $B$ stems from the the existence and uniqueness result of polynomial interpolation on $T$. Together with the closed-form integral \eqref{equ:Bint} of $B^T_{lmn}$, we obtain the weights \eqref{equ:weightsT} such that quadrature rule \eqref{equ:quadT} is exact for $\chi$.
\end{proof}

\begin{theorem}[Quadrature on triangles: error analysis]\label{thm:quaderrorT}
Let $T$ be a triangle in $\mathbb{R}^2$ and $d\geq 1$ an integer. For any $f\in W^{d+1,p}(T)$ with $1\leq p\leq \infty$, the quadrature rule \eqref{equ:quadT} in domain points \eqref{equ:domainpts} with quadrature weights \eqref{equ:weightsT} satisfies
\begin{equation*}
\left|\int_{T}f(x,y)dxdy - \sum_{i+j+k=d}w_{ijk}f(\xi_{ijk})\right|\leq K_1|A_T|^{1/q}|T|^{d+1}|f|_{d+1, p, T},
\end{equation*}
where $1/p+1/q =1$ and $K_1>0$ is the constant given in Lemma \ref{lem:main}.
\end{theorem}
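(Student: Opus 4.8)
The plan is to exploit the polynomial exactness established in Theorem \ref{thm:quad} to reduce the quadrature error to an interpolation error, which is already controlled by Lemma \ref{lem:main}. First I would introduce the interpolatory polynomial $P_f\in\mathbb{P}_d$ determined by the conditions $P_f(\xi_{ijk}) = f(\xi_{ijk})$ for all domain points $\{\xi_{ijk}\}_{i+j+k=d}$; such a polynomial exists and is unique by the interpolation theory recalled before \eqref{equ:interp}, and $f\in W^{d+1,p}(T)$ guarantees the hypotheses of Lemma \ref{lem:main} are met.

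The crucial observation is that the quadrature values of $f$ and of $P_f$ coincide at every quadrature point, precisely because the $\xi_{ijk}$ are the interpolation nodes. Hence $\sum_{i+j+k=d}w_{ijk}f(\xi_{ijk}) = \sum_{i+j+k=d}w_{ijk}P_f(\xi_{ijk})$, and since $P_f\in\mathbb{P}_d$, Theorem \ref{thm:quad} shows that this sum equals $\int_T P_f(x,y)\,dxdy$ exactly. Subtracting, the quadrature error collapses to
\begin{equation*}
\int_T f(x,y)\,dxdy - \sum_{i+j+k=d}w_{ijk}f(\xi_{ijk}) = \int_T \bigl(f(x,y) - P_f(x,y)\bigr)\,dxdy .
\end{equation*}

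It then remains to bound $\left|\int_T (f - P_f)\right| \leq \int_T |f - P_f|$ in terms of the $L^p$ error. I would apply Hölder's inequality with conjugate exponents $p$ and $q$ against the constant function $1$, giving $\int_T |f-P_f| \leq \|f - P_f\|_{p,T}\,\|1\|_{q,T} = A_T^{1/q}\|f - P_f\|_{p,T}$; the identity $\|1\|_{q,T}=A_T^{1/q}$ holds uniformly across $1\leq q\leq\infty$, which is what lets the single statement cover the endpoint cases $p=1$ (so $q=\infty$) and $p=\infty$ (so $q=1$). Finally, invoking the interpolation estimate $\|f - P_f\|_{p,T}\leq K_1|T|^{d+1}|f|_{d+1,p,T}$ of Lemma \ref{lem:main} yields the claimed bound. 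I do not expect a genuine obstacle here beyond correctly tracking the Hölder exponent and the endpoint conventions; the main conceptual point — and the reason the argument is so clean — is that the interpolant's quadrature sum reproduces its integral exactly, so that the only error incurred is the interpolation defect measured in $L^p$.
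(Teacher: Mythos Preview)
Your proposal is correct and matches the paper's proof essentially step for step: introduce the degree-$d$ interpolant at the domain points, use exactness so that the quadrature sum equals $\int_T P_f$, apply H\"older to pass from $\int_T|f-P_f|$ to $A_T^{1/q}\|f-P_f\|_{p,T}$, and finish with Lemma~\ref{lem:main}. The only cosmetic difference is that you explicitly invoke Theorem~\ref{thm:quad} for the identity $\sum_{i+j+k=d}w_{ijk}P_f(\xi_{ijk})=\int_T P_f$, whereas the paper states this in one line.
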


\begin{proof}
Let $P\in\mathbb{P}_d$ be the polynomial interpolant of $f$. Then the quadrature rule
$$\int_Tf(x,y)dxdy\approx \int_{T}P(x,y)dxdy = \sum_{i+j+k=d}w_{ijk}f(\xi_{ijk})$$
satisfies
\begin{equation*}\begin{split}
\left|\int_{T}f(x,y)dxdy - \sum_{i+j+k=d}w_{ijk}f(\xi_{ijk})\right|
&\leq \int_T\left|f(x,y)-P(x,y)\right|dxdy\\
&\leq |A_T|^{1/q}\|f-P\|_{p,T}.
\end{split}\end{equation*}
Together with Lemma \ref{lem:main}, the desired error estimate is obtained.
\end{proof}

Figure \ref{fig:location} illustrates the locations of domain points \eqref{equ:domainpts} for equilateral triangles with $d=1,2,\ldots,6$, along with their corresponding weights \eqref{equ:weightsT}. Note that the proposed set of weights \eqref{equ:weightsT} may include negative values. In our analysis of Marcinkiewicz--Zygmund inequalities, we are particularly interested in quadrature rules with positive weights. As shown in Figure \ref{fig:location}, the weights \eqref{equ:weightsT} are positive for $d=1,3,5$.

\begin{figure}[htbp]
\centering
\includegraphics[width = 0.45\textwidth]{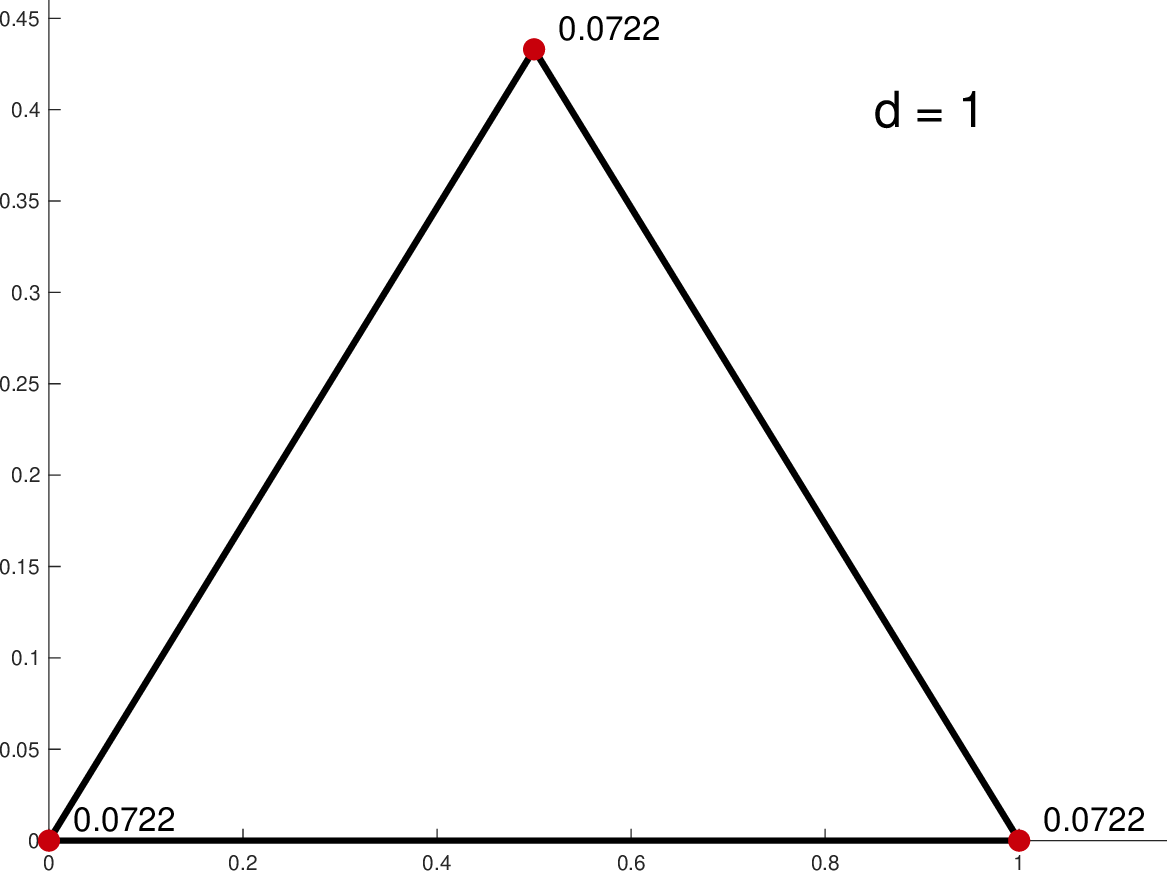}\quad\quad\includegraphics[width = 0.45\textwidth]{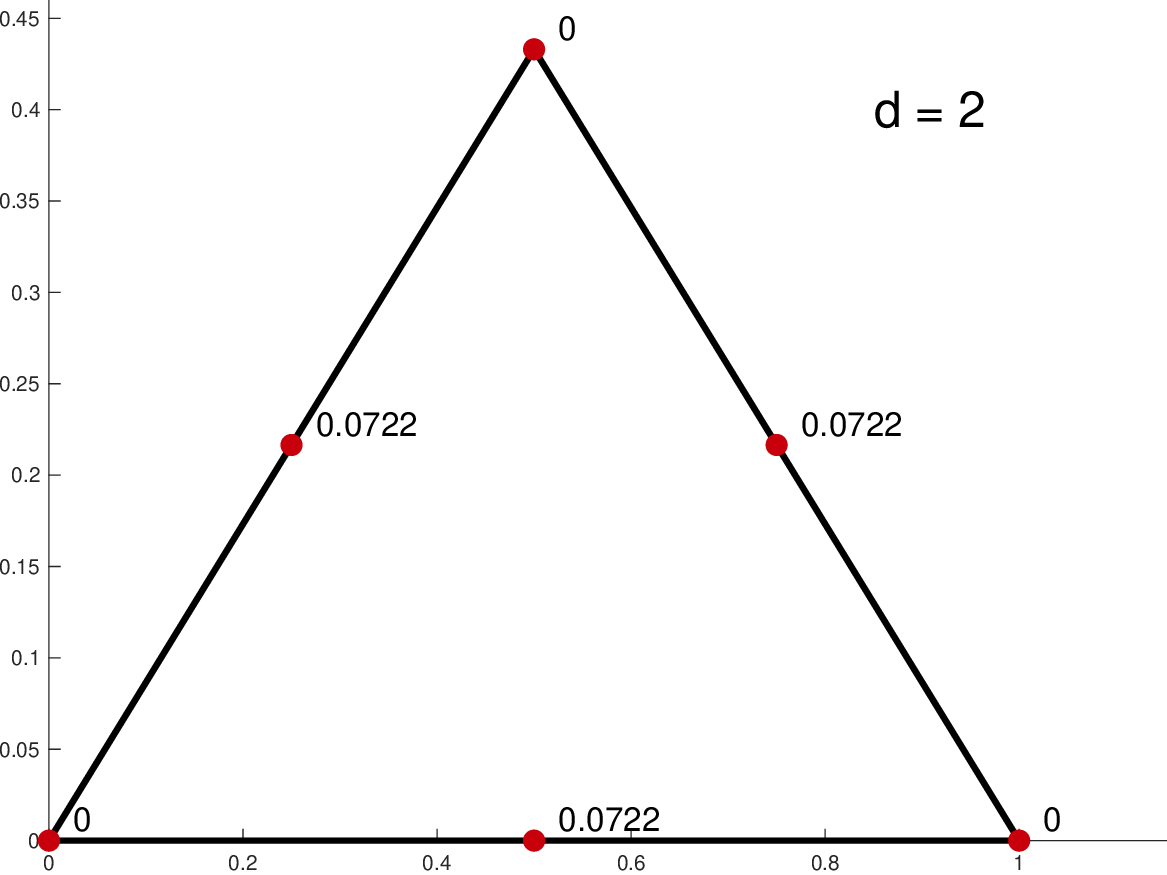}\\~\\~\\
\includegraphics[width = 0.45\textwidth]{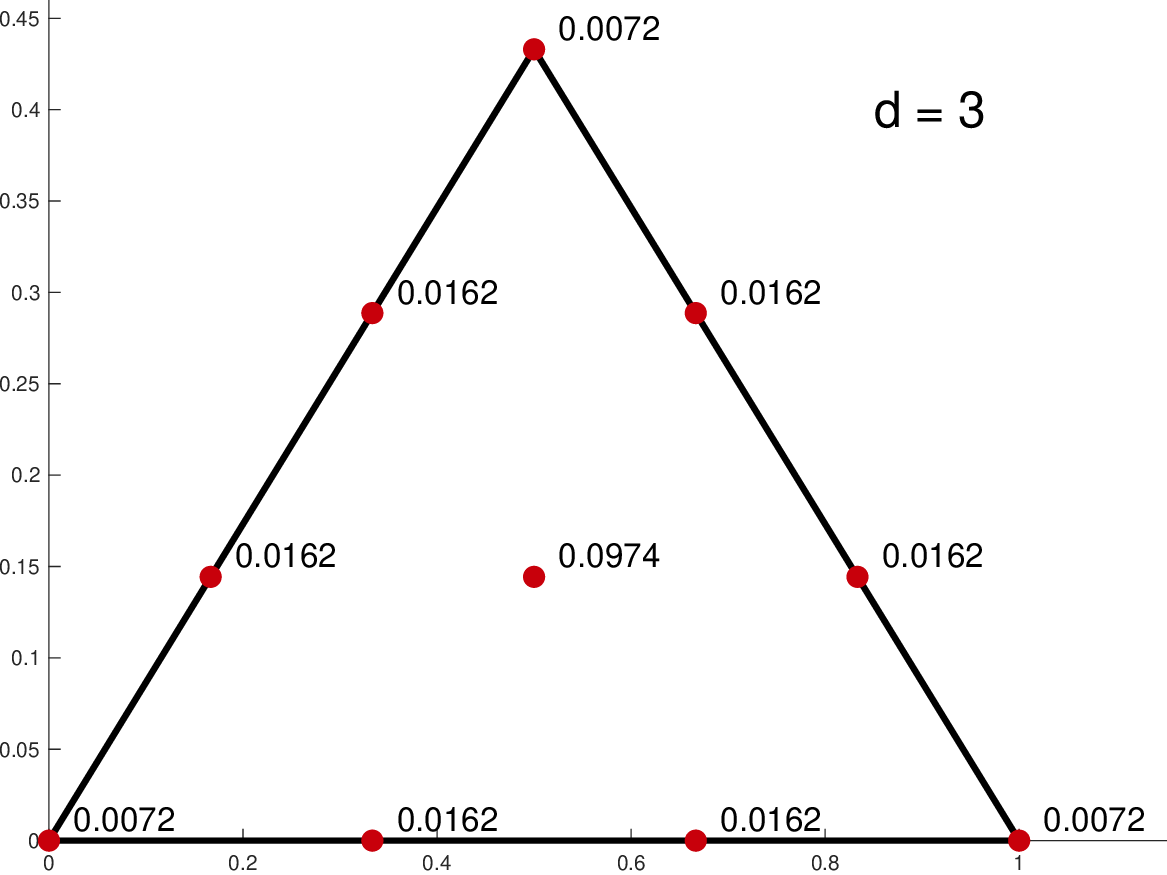}\quad\quad\includegraphics[width = 0.45\textwidth]{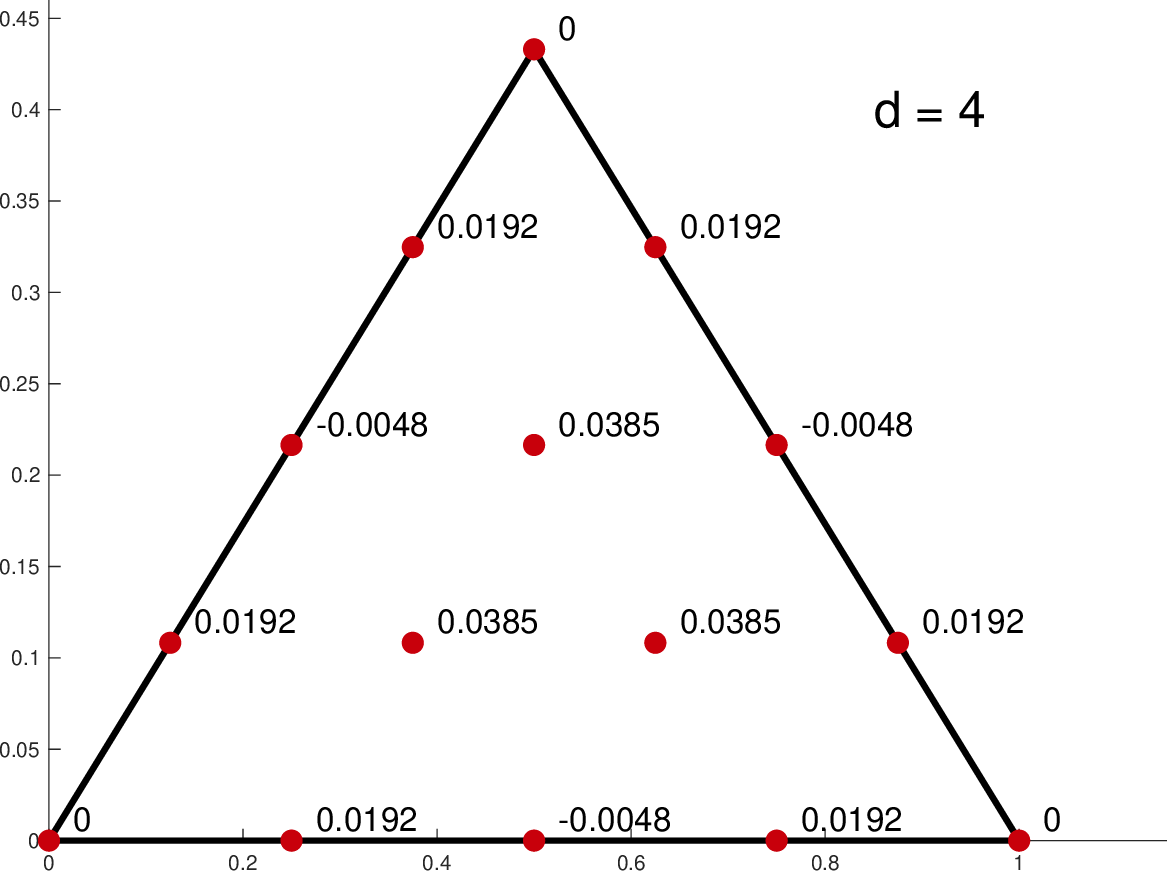}\\~\\~\\
\includegraphics[width = 0.45\textwidth]{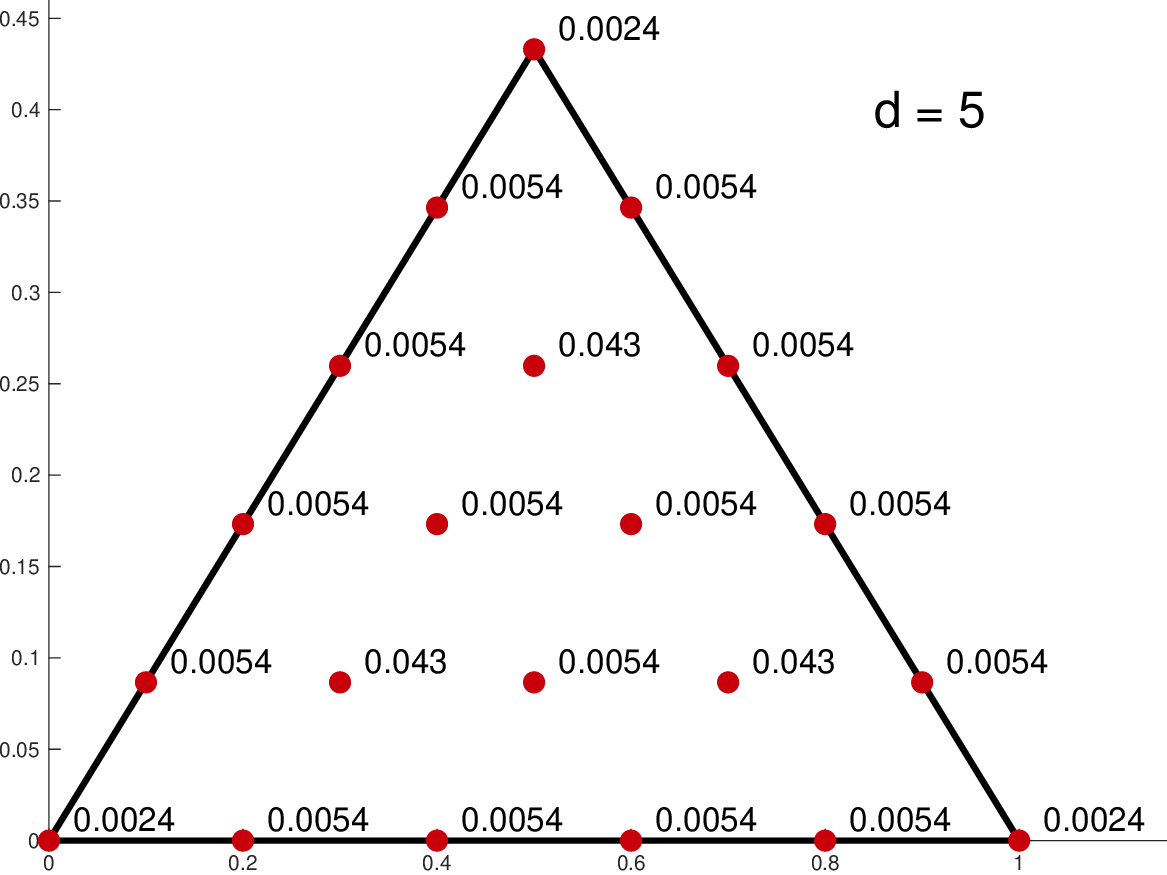}\quad\quad\includegraphics[width = 0.45\textwidth]{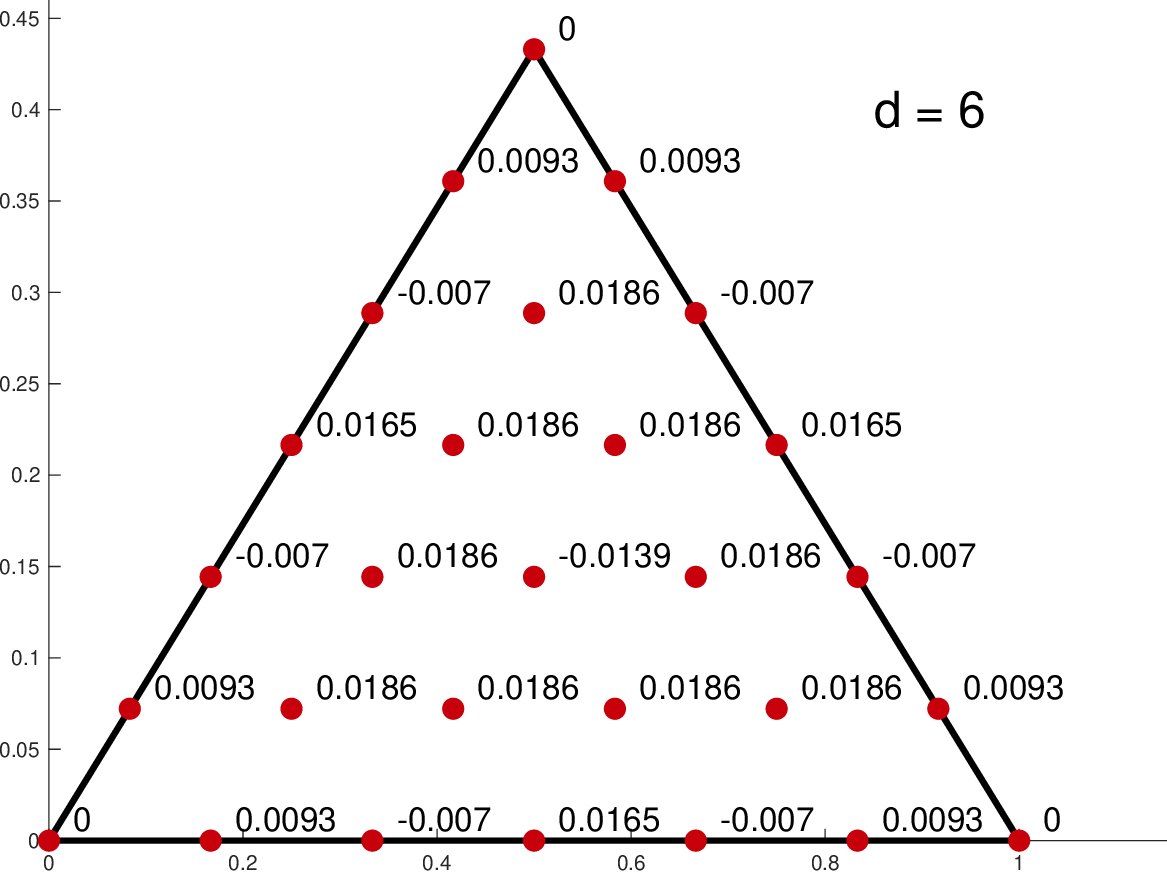}\\
\caption{Locations of domain points and corresponding weights for various $d$.}\label{fig:location}
\end{figure}

For triangles other than equilateral triangles, note that barycentric coordinates are solutions to the linear system \eqref{equ:bary}. Any triangle can be considered as a linear transformation of an equilateral triangle. Applying such a linear, full-rank operator to both sides of the linear system \eqref{equ:bary} does not alter the barycentric coordinates. Therefore, we can claim that for any triangle, the weights \eqref{equ:weightsT} are positive for $d=1,3,5$. Specifically, for $d=1$, we have
\begin{equation}\label{equ:Bidentity}
\bm{B}=
\begin{bmatrix} B_{001}^T(\xi_{001}) & B_{010}^T(\xi_{001}) & B_{100}^T(\xi_{001})
\\ B_{001}^T(\xi_{010}) & B_{010}^T(\xi_{010})) & B_{100}^T(\xi_{010})\\ 
B_{001}^T(\xi_{100}) & B_{010}^T(\xi_{100}) & B_{100}^T(\xi_{100}) \end{bmatrix}
= \begin{bmatrix}
1 & 0 & 0\\
0 & 1 & 0\\
0 & 0 & 1
\end{bmatrix}\in\mathbb{R}^3.
\end{equation}
is an identity matrix. This observation follows immediately from the fact that $b_i(x_i,y_i)$ is 1 and vanishes on the edge of $T$ opposite to $(x_i,y_i)$ for each $i = 1,2,3$.

Thus we focus on the case of $d=1,3,5$ in the following discussion.

\begin{theorem}[MZ on triangles, I]\label{thm:MZ1}
Let $T$ be a triangle in $\mathbb{R}^2$ and $d = 1,3,5$. For any polynomial $\chi\in\mathbb{P}_N$ with $N\leq d$, the quadrature rule \eqref{equ:quadT} in domain points \eqref{equ:domainpts} with weights \eqref{equ:weightsT} satisfies the following Marcinkiewicz--Zygmund inequalities:
\begin{equation}\label{equ:MZIp}\begin{split}
C_1(p,d)\int_T|\chi(x,y)|^pdxdy&\leq \sum_{i+j+k=d}w_{ijk}|\chi(\xi_{ijk})|^p\\
&\leq C_2(p,d)K_0^p\int_T|\chi(x,y)|^pdxdy
\end{split}\end{equation}
for $1\leq p < \infty$, and
\begin{equation}\label{equ:MZIinf}
C_3(d)\max_{i+j+k=d}|\chi(\xi_{ijk})|\leq \|\chi\|_{\infty,T}\leq K_0\max_{i+j+k=d}|\chi(\xi_{ijk})|
\end{equation}
for $p=\infty$, where $C_1,C_2>0$ are constants depending only on $d$ and $p$, $C_3>0$ are constants depending only on $d$, and $K_0>0$ is the constant given in Lemma \ref{lem:normequivalence}. All these constants are independent of $N$.
\end{theorem}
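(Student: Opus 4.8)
The plan is to push everything onto the coefficient vector of the degree-$d$ B-form and to exploit the fact that the evaluation matrix $\bm{B}$ is identical for every triangle. Since $\chi\in\mathbb{P}_N\subseteq\mathbb{P}_d$, I would write $\chi=\sum_{l+m+n=d}c_{lmn}B^T_{lmn}$ as in \eqref{equ:Bform} with coefficient vector $\bm{c}$, and set $\bm{x}=[\chi(\xi_{ijk})]_{i+j+k=d}$. The interpolation identity reads $\bm{x}=\bm{B}\bm{c}$, equivalently $\bm{c}=\bm{B}^{-1}\bm{x}$. The crucial observation, already noted before the statement, is that the barycentric coordinates of $\xi_{ijk}$ are $(i/d,j/d,k/d)$ regardless of $T$, so $\bm{B}$, $\bm{B}^{-1}$, the column sums $\sigma_{ijk}(\bm{B}^{-1})$, and the operator norms $\|\bm{B}\|_p,\|\bm{B}^{-1}\|_p$ all depend only on $d$ (and $p$). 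This is what will keep every constant free of $T$ and of $N$.

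First I would treat $1\le p<\infty$. Because $d\in\{1,3,5\}$ forces all weights positive, I can sandwich the quadrature sum by its extreme weights,
$$\min_{ijk}w_{ijk}\,\|\bm{x}\|_p^p\le \sum_{i+j+k=d}w_{ijk}|\chi(\xi_{ijk})|^p\le \max_{ijk}w_{ijk}\,\|\bm{x}\|_p^p,$$
and by \eqref{equ:weightsT} both $\min_{ijk}w_{ijk}$ and $\max_{ijk}w_{ijk}$ equal $A_T/\binom{d+2}{2}$ times a positive constant depending only on $d$. Next I relate $\|\bm{x}\|_p$ to $\|\bm{c}\|_p$ via $\bm{x}=\bm{B}\bm{c}$ and $\bm{c}=\bm{B}^{-1}\bm{x}$, giving $\|\bm{B}^{-1}\|_p^{-1}\|\bm{c}\|_p\le\|\bm{x}\|_p\le\|\bm{B}\|_p\|\bm{c}\|_p$, and finally I relate $\|\bm{c}\|_p$ to $\|\chi\|_{p,T}$ through the norm equivalence of Lemma \ref{lem:normequivalence}, which rearranges to $A_T^{-1}\|\chi\|_{p,T}^p\le\|\bm{c}\|_p^p\le K_0^pA_T^{-1}\|\chi\|_{p,T}^p$. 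Chaining these three two-sided estimates, the factor $A_T$ in the weights cancels the $A_T^{-1}$ from the norm equivalence, leaving constants assembled only from $\binom{d+2}{2}$, the column-sum extremes, and $\|\bm{B}\|_p,\|\bm{B}^{-1}\|_p$ — that is, $C_1(p,d)$ and $C_2(p,d)$ depending only on $p$ and $d$, with a single $K_0^p$ surviving on the upper side exactly as in \eqref{equ:MZIp}.

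For $p=\infty$ the argument is parallel but uses the $\ell_\infty$ half of Lemma \ref{lem:normequivalence}. The left inequality of \eqref{equ:MZIinf} is immediate because each $\xi_{ijk}\in T$, so $\max_{ijk}|\chi(\xi_{ijk})|=\|\bm{x}\|_\infty\le\|\chi\|_{\infty,T}$, and $C_3(d)=1$ works. For the right inequality I would combine the constant-free half $\|\chi\|_{\infty,T}\le\|\bm{c}\|_\infty$ with $\bm{c}=\bm{B}^{-1}\bm{x}$ to obtain $\|\chi\|_{\infty,T}\le\|\bm{B}^{-1}\|_\infty\max_{ijk}|\chi(\xi_{ijk})|$, whose factor is a $d$-only constant identified with the $K_0$ of the statement (consistently, for $d=1$ one has $\bm{B}=\bm{I}$ and $K_0=1$, so equality holds).

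The step I expect to be most delicate is ensuring that every intermediate constant is genuinely independent of $T$ and of $N$. This rests entirely on the affine invariance of $\bm{B}$ (so that its inverse, column sums, and $\ell_p$ operator norms are $d$-only) and on the weight positivity for $d=1,3,5$, which is precisely what permits the two-sided sandwich of the sum by $\min_{ijk}w_{ijk}$ and $\max_{ijk}w_{ijk}$; had any weight been negative or zero the lower bound would collapse. Once those two facts are secured, the remainder is the careful composition of the three norm equivalences and bookkeeping of the cancellation of $A_T$ together with the correct placement of the $K_0$ factor, noting that $N$ never enters because $\chi$ is always represented in the fixed degree-$d$ B-form.
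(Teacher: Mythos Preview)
Your proposal is correct and follows essentially the same route as the paper: represent $\chi$ in the degree-$d$ B-form, exploit $\bm{x}=\bm{B}\bm{c}$ with the triangle-independent matrix $\bm{B}$, and invoke the norm equivalence of Lemma~\ref{lem:normequivalence}; the only cosmetic difference is that you package the $\bm{B}$-dependence through the operator norms $\|\bm{B}\|_p,\|\bm{B}^{-1}\|_p$ whereas the paper tracks the column sums $\sigma_{ijk}(\bm{B}^{-1})$ directly. Your observation that the left inequality in \eqref{equ:MZIinf} is immediate with $C_3(d)=1$ (since each $\xi_{ijk}\in T$) is in fact sharper than the paper's route through $\|\bm{B}\|_\infty=1$; just be aware that the factor you obtain on the right of \eqref{equ:MZIinf} is $\|\bm{B}^{-1}\|_\infty$, a $d$-only constant that need not literally coincide with the $K_0$ of Lemma~\ref{lem:normequivalence}, though this is merely a labeling issue and the paper's own proof has the same looseness.
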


\begin{proof}
For any $\chi\in\mathbb{P}_N$ with $N\leq d$, let $P$ be an interpolatory polynomial in $\mathbb{P}_d$ interpolating $\chi$ at domain points \eqref{equ:domainpts}. From the proof of Theorem \ref{thm:quad}, we have
$$\chi(x,y) = P(x,y) = \sum_{l+m+n=d}c_{lmn}B^T_{lmn}(x,y)$$
with $\bm{Bc} = \bm{x}$. For $d=1,3,5$, note that $w_{ijk}>0$ and hence $\sigma_{ijk}(\bm{B}^{-1})>0$ for $i+j+k=d$.

We first prove the case of $1\leq p<\infty$. Applying the norm equivalence in Lemma \ref{lem:normequivalence} to $\chi$, we obtain
$$\frac{A_T^{1 / p}}{K_0}\|\bm{c}\|_p \leq\|\chi\|_{p, T} = \|P\|_{p, T} \leq A_T^{1 / p}\|\bm{c}\|_p.$$
From $\|\chi\|_{p,T}^p \leq A_T\|\bm{c}\|_{p}^p$ we have
\begin{equation*}
\begin{split}
\|\chi\|_{p,T}^p 
&\leq A_T\sum_{i+j+k=d}|\sigma_{ijk}(\bm{B}^{-1})|^p|\chi(\xi_{ijk})|^p \\
& = A_T\sum_{i+j+k=d} \left[\frac{\binom{d+2}{2}w_{ijk}}{A_T}\right]^p|\chi(\xi_{ijk})|^p\\
& \leq \frac{\binom{d+2}{2}^p}{|A_T|^{p-1}}\left[\max_{i+j+k=d}w_{ijk}\right]^{p-1}\sum_{i+j+k=d}w_{ijk}|\chi(\xi_{ijk})|^p\\
& = \binom{d+2}{2}\left[\max_{i+j+k=d}\sigma_{ijk}(\bm{B}^{-1})\right]^{p-1}\sum_{i+j+k=d}w_{ijk}|\chi(\xi_{ijk})|^p,
\end{split}\end{equation*}
implying 
$$\sum_{i+j+k=d}w_{ijk}|\chi(\xi_{ijk})|^p\geq \frac{1}{\binom{d+2}{2}\left(\max_{i+j+k=d}\sigma_{ijk}(\bm{B}^{-1})\right)^{p-1}}\|\chi\|_{p,T}^p.$$
Conversely, from $\|\chi\|_{p,T}^p\geq \frac{A_T}{K_0^p}\|\bm{c}\|_{p}^p$ we have
\begin{equation*}
\begin{split}
\|\chi\|_{p,T}^p 
& \geq \frac{\binom{d+2}{2}}{K_0^p}\left[\min_{i+j+k=d}\sigma_{ijk}(\bm{B}^{-1})\right]^{p-1}\sum_{i+j+k=d}w_{ijk}|\chi(\xi_{ijk})|^p,
\end{split}\end{equation*}
implying
$$\sum_{i+j+k=d}w_{ijk}|\chi(\xi_{ijk})|^p\leq \frac{K_0^p}{\binom{d+2}{2}\left(\min_{i+j+k=d}\sigma_{ijk}(\bm{B}^{-1})\right)^{p-1}}\|\chi\|_{p,T}^p.$$
Combing both inequalities, we have the Marcinkiewicz--Zygmund inequality \eqref{equ:MZIp} for $1\leq p<\infty$.

For $p=\infty$, the norm equivalence in Lemma \ref{lem:normequivalence} indicates
$$\frac{\|\bm{c}\|_{\infty}}{K_0} \leq\|\chi\|_{\infty,T} \leq\|\bm{c}\|_{\infty}.$$
Thus from $\|\chi\|_{\infty,T}\leq \|\bm{c}\|_{\infty}$ we have

$$\|\chi\|_{\infty,T}\leq\max_{i+j+k=d}|\sigma_{ijk}(\bm{B}^{-1})||\chi(\xi_{ijk})|\leq \left[\max_{i+j+k=d}\sigma_{ijk}(\bm{B}^{-1})\right]\max_{i+j+k=d}|\chi(\xi_{ijk})|$$
and hence
$$\max_{i+j+k=d}|\chi(\xi_{ijk})|\geq \frac{1}{\max_{i+j+k=d}\sigma_{ijk}(\bm{B}^{-1})}\|\chi\|_{\infty,T}.$$
From $\|\chi\|_{\infty,T} \geq \frac{\|\bm{c}\|_{\infty}}{K_0}$, recalling from \eqref{equ:Bsum} that $\|\bm{B}\|_{\infty} = 1$ as $B^T_{lmn}(\xi_{ijk})\geq0$, we have
$$|\chi\|_{\infty,T}\geq\frac{1}{K_0}\frac{\|\bm{x}\|_{\infty}}{\|\bm{B}\|_{\infty}} = \frac{1}{K_0}\max_{i+j+k=d}|\chi(\xi_{ijk})|$$
and
$$\max_{i+j+k=d}|\chi(\xi_{ijk})|\leq K_0\|\chi\|_{\infty,T}.$$ 
Combing both inequalities, we have the Marcinkiewicz--Zygmund inequality \eqref{equ:MZIinf} for $p=\infty$.
\end{proof}

\begin{theorem}[MZ-like on triangles, II]\label{thm:MZ2}
Let $T$ be a triangle in $\mathbb{R}^2$ and let $d\geq 1$ be an integer. For any polynomial $\chi\in\mathbb{P}_N$ with $N> d$, the quadrature rule \eqref{equ:quadT} with weights \eqref{equ:weightsT} satisfies the following estimates
\begin{equation}\label{equ:MZIp2}
\begin{split}
C_1(p,d)(\|\chi\|_{p,T}-K_1|T|^{d+1}|\chi|_{p,d+1,T})^p&\leq \sum_{i+j+k=d}w_{ijk}|\chi(\xi_{ijk})|^p\\
&\leq C_2(p,d)K_0^p(\|\chi\|_{p,T}+K_1|T|^{d+1}|\chi|_{p,d+1,T})^p
\end{split}\end{equation}
for $1\leq p < \infty$, and
\begin{equation}\label{equ:MZIinf2}
\begin{split}
C_3(d)&\left(\|\chi\|_{\infty,T}-K_1|T|^{d+1}|\chi|_{\infty,d+1,T}\right)\leq\max_{i+j+k=d}|\chi(\xi_{ijk})| \\
&\leq K_0\left(\|\chi\|_{\infty,T}+K_1|T|^{d+1}|\chi|_{\infty,d+1,T}\right)\end{split}\end{equation}
for $p=\infty$, where $C_1,C_2,C_3>0$ are constants given in Theorem \ref{thm:MZ1}, $K_0,K_1>0$ are constants given in Lemmata \ref{lem:normequivalence} and \ref{lem:main}, respectively. All these constants are independent of $N$.
\end{theorem}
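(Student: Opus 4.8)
The plan is to reduce Theorem \ref{thm:MZ2} to the exact-degree result Theorem \ref{thm:MZ1}, treating the case $N>d$ as a perturbation governed by the interpolation error. The key observation is that the quadrature functional reads only the point values at the domain points \eqref{equ:domainpts}. So I would first let $P\in\mathbb{P}_d$ be the interpolatory polynomial of $\chi$ at these points, for which $P(\xi_{ijk})=\chi(\xi_{ijk})$ for every $i+j+k=d$. Consequently
$$\sum_{i+j+k=d}w_{ijk}|\chi(\xi_{ijk})|^p=\sum_{i+j+k=d}w_{ijk}|P(\xi_{ijk})|^p,\qquad \max_{i+j+k=d}|\chi(\xi_{ijk})|=\max_{i+j+k=d}|P(\xi_{ijk})|,$$
so the entire quadrature sum for $\chi$ coincides with that for $P$, even though $\chi\notin\mathbb{P}_d$.

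Since $P\in\mathbb{P}_d$, Theorem \ref{thm:MZ1} applies directly to it (in the positive-weight regime $d=1,3,5$ in which the constants $C_1,C_2,C_3$ are defined), bounding the sum above by $C_2(p,d)K_0^p\|P\|_{p,T}^p$ and below by $C_1(p,d)\|P\|_{p,T}^p$, with the analogous two-sided control by $C_3(d)\|P\|_{\infty,T}$ and $K_0\|P\|_{\infty,T}$ when $p=\infty$. It then remains to trade $\|P\|_{p,T}$ for $\|\chi\|_{p,T}$. Every polynomial lies in $W^{d+1,p}(T)$, so Lemma \ref{lem:main} gives $\|\chi-P\|_{p,T}\le K_1|T|^{d+1}|\chi|_{d+1,p,T}$, and the triangle inequality then sandwiches
$$\|\chi\|_{p,T}-K_1|T|^{d+1}|\chi|_{d+1,p,T}\le\|P\|_{p,T}\le\|\chi\|_{p,T}+K_1|T|^{d+1}|\chi|_{d+1,p,T}.$$
Raising these to the $p$-th power and composing with the bounds from Theorem \ref{thm:MZ1} yields \eqref{equ:MZIp2}; the $p=\infty$ case repeats the argument with the $L^\infty$ interpolation error and gives \eqref{equ:MZIinf2}.

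I expect the only real subtlety to be the lower bound, where the factor $\|\chi\|_{p,T}-K_1|T|^{d+1}|\chi|_{d+1,p,T}$ can turn negative once the interpolation error dominates the $L^p$ mass of $\chi$. In that regime the asserted lower bound is vacuous, since its left-hand side is nonpositive while the quadrature sum is nonnegative, so nothing is lost and the inequality holds trivially; otherwise the monotonicity of $t\mapsto t^p$ on $[0,\infty)$ makes the $p$-th power step legitimate. Apart from this book-keeping the argument is routine — all the analytic content is supplied by Theorem \ref{thm:MZ1} and Lemma \ref{lem:main}, and the theorem is genuinely just the $N\le d$ estimate plus an additive interpolation-error correction.
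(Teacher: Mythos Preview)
Your proposal is correct and follows essentially the same route as the paper: interpolate $\chi$ by $P\in\mathbb{P}_d$ at the domain points so that the quadrature sums coincide, apply the Marcinkiewicz--Zygmund bounds of Theorem~\ref{thm:MZ1} to $P$, and then use Lemma~\ref{lem:main} together with the triangle inequality to sandwich $\|P\|_{p,T}$ between $\|\chi\|_{p,T}\pm K_1|T|^{d+1}|\chi|_{d+1,p,T}$. The paper phrases the middle step as re-running the $\|\bm{c}\|_p$ argument from the proof of Theorem~\ref{thm:MZ1} rather than citing that theorem as a black box, but the content is identical; your explicit remark that the lower bound becomes vacuous (hence trivially true) when the bracket is negative is a welcome clarification the paper omits.
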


\begin{proof}
The proof immediately follows from that of Theorem \ref{thm:MZ1}, with the only difference that $P(x,y)=\chi(x,y)$ no longer holds. Instead, by Lemma \ref{lem:main} we have
\begin{equation}\label{equ:stepwithmarkov}
\|\chi-P\|_{p,T}\leq K_1|T|^{d+1}|\chi|_{d+1,p,T}.
\end{equation}

For $1\leq p<\infty$, both $\|\chi\|_{p,T}-\|P\|_{p,T}$ and $\|P\|_{p,T}-\|\chi\|_{p,T}$ are bounded by $K_1|T|^{d+1}|\chi|_{d+1,p,T}$. Thus, by the norm equivalence in Lemma \ref{lem:normequivalence}, 
$$\|\chi\|_{p,T}-K_1|T|^{d+1}|\chi|_{p,d+1,T}\leq \|P\|_{p,T}\leq A_T^{1/p}\|\bm{c}\|_{p}$$
and 
$$\|\chi\|_{p,T}+K_1|T|^{d+1}|\chi|_{p,d+1,T}\geq \|P\|_{p,T}\geq \frac{A_T^{1/p}}{K_0}\|\bm{c}\|_{p},$$
then the estimate \eqref{equ:MZIp2} for $1\leq p<\infty$ follows from applying the argument for $\|\bm{c}\|_p$ for $1\leq p<\infty$ as in the proof of Theorem \ref{thm:MZ1}.

For $p=\infty$, similarly we have 
$$\|\chi\|_{\infty,T}-K_1|T|^{d+1}|\chi|_{\infty,d+1,T}\leq \|P\|_{\infty,T}\leq \|\bm{c}\|_{\infty}$$
and 
$$\|\chi\|_{p,\infty}+K_1|T|^{d+1}|\chi|_{\infty,d+1,T}\geq \|P\|_{\infty,T}\geq \frac{1}{K_0}\|\bm{c}\|_{\infty},$$
thus we have the estimate \eqref{equ:MZIinf2} following the argument for $p=\infty$ in the proof of Theorem \ref{thm:MZ1}.
\end{proof}

\begin{remark}\label{rem:limitation}
Estimates \eqref{equ:MZIp2} and \eqref{equ:MZIinf2} are not Marcinkiewicz--Zygmund inequalities. Of course we could incorporate \eqref{equ:stepwithmarkov} with the Markov inequality in Lemma \ref{lem:markov} to further obtain
\begin{equation*}
\|\chi-P\|_{p,T}\leq K_1K_2\frac{|T|^{d+1}}{\rho_{T}^{d+1}}\|\chi\|_{p,T}  = K_1K_2\kappa_T^{d+1}\|\chi\|_{p,T},
\end{equation*}
and then show 
$\sum_{i+j+k=d}w_{ijk}|\chi(\xi_{ijk})|^p$ and $\max_{i+j+k=d}|\chi(\xi_{ijk})|$ can be bounded in terms of $\|\chi\|_{p,\Omega}$ in the Marcinkiewicz--Zygmund sense. However, in that case, the constants $1+ K_1K_2\kappa_T^{d+1}$ and $1- K_1K_2\kappa_T^{d+1}$ in both bounds can not be shrunk toward $1$ by shrinking the triangle (or refining the triangulation on polygons; see later analysis on polygons), limiting the resulted estimates from practical applications. Note that the shape parameter $\kappa_T>1$ does not reduce as triangles shrinking: an equilateral triangle has $\kappa_T=2 \sqrt{3}$, and any other triangle has a larger shape parameter. This situation was caused by the $1/\rho_T$ factor in the Markov inequality in Lemma \ref{lem:markov}, and $|T|/\rho_T$ gives $\kappa_T$. In other words, if this Markov inequality is sharp, then we may not have Marcinkiewicz--Zygmund inequalities for the quadrature \eqref{thm:quad} with controllable 
Marcinkiewicz--Zygmund constants.
\end{remark}

The case of $d=1$ is of significant interest in our subsequent investigation of polygons. In this scenario, $\bm{B}$ and hence $\bm{B}^{-1}$ are identity matrices, see \eqref{equ:Bidentity}, and consequently, 
$$\max_{i+j+k=d}\sigma_{ijk}(\bm{B}^{-1}) = \min_{i+j+k=d}\sigma_{ijk}(\bm{B}^{-1})=1.$$
Moreover in this case, for constants in above theorems, we have
$C_1=1/3$, $C_2=1/3$, and $C_3=1$.
Referring to Lemma \ref{lem:normequivalence} we have $K_0=1$ if $p=\infty$.

\subsection{On polygons}\label{sec:polygons}

Let $\Omega\subset\mathbb{R}^2$ be a regular or irregular polygon. Given a set of scattered points $\{(x_j,y_j)\}_{j=1}^m\subset\Omega$ with vertices of $\Omega$ included, we generate a triangulation $\triangle$ of $\Omega$ in the sense that $\Omega =  \bigcup_{T\in\triangle}T$ and these points serve as vertices of these triangles $T$. We propose the following quadrature rule for numerical integration on $\Omega$: 
\begin{equation}\label{equ:quadP}
\sum_{j=1}^mw_jf(x_j,y_j)\approx\int_{\Omega}f(x,y)dxdy,
\end{equation}
where 
\begin{equation}\label{equ:weightsP}
w_j:=\sum_{T:(x_j,y_j)\in T}\frac{A_T}{3}.
\end{equation}
In other words, for any quadrature point $(x_j,y_j)$, the corresponding weight $w_j$ is one third of the sum of the areas of all triangles with $(x_j, y_j)$ being a vertex. The intuition behind \eqref{equ:weightsP} stems from the weights \eqref{equ:weightsT} on triangles with $d=1$. Note that in this scenario on polygons, only scattered points are available as quadrature points, and additional domain points within each $T$ are not available.

As noted in Remark \ref{rem:limitation}, a limitation on triangles is that applying the Markov inequality to the semi-norm $|\chi|_{p,d+1,\Omega}$ introduces a factor of $1/\rho_{T}^{2}$, and $|T|^2/\rho_{T}^{2}$ results in the squared shape parameter $\kappa_T^2$. However, on polygons, we have the privilege of letting the unfavored factor be dependent of the polygonal $\Omega$ rather than any single triangle in $\triangle$. 

\begin{lemma}[Markov on polygons]\label{lem:Markovpolygons}
For all $1\leq p\leq\infty$, there exists a constant $C_5$ depending on the boundary of $\Omega$ such that for all polynomials $P \in \mathbb{P}_d$,
\begin{equation}
\left\|D_x^\alpha D_y^\beta P\right\|_{p, \Omega} \leq C_5d^{\alpha+\beta}\|P\|_{p, \Omega}, \quad 0 \leq \alpha+\beta \leq d.
\end{equation}
\end{lemma}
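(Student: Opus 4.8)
The plan is to reduce the inequality on $\Omega$ to the triangle Markov inequality of Lemma \ref{lem:markov} by fixing, once and for all, a \emph{coarsest} triangulation $\triangle_0$ of $\Omega$ whose only vertices are the vertices of $\Omega$ itself. Any simple polygon, convex or not, admits such a triangulation, and since its triangles are determined solely by the geometry of $\partial\Omega$, the quantity $\rho_{\min}:=\min_{T\in\triangle_0}\rho_T$ is a strictly positive constant depending only on the boundary of $\Omega$. This is precisely the device that circumvents the obstruction of Remark \ref{rem:limitation}: because $\triangle_0$ is never refined, $\rho_T\geq\rho_{\min}$ stays bounded away from $0$ and the resulting constant cannot degrade, in contrast to the single-triangle case where $|T|/\rho_T=\kappa_T$ is forced to stay above $2\sqrt3$ no matter how small the triangle is.

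First I would treat $1\leq p<\infty$. For any $P\in\mathbb{P}_d$, its restriction to each $T\in\triangle_0$ again lies in $\mathbb{P}_d$, so Lemma \ref{lem:markov} gives $\|D_x^\alpha D_y^\beta P\|_{p,T}\leq K_0(2d)^{\alpha+\beta}\rho_T^{-(\alpha+\beta)}\|P\|_{p,T}$. Raising to the $p$-th power, bounding $\rho_T^{-(\alpha+\beta)}\leq\rho_{\min}^{-(\alpha+\beta)}$ uniformly over $T$, and summing over $\triangle_0$ through the mesh-dependent identity \eqref{equ:meshdependent} applied on both sides yields
\begin{equation*}
\left\|D_x^\alpha D_y^\beta P\right\|_{p,\Omega}^p\leq\left(K_0(2d)^{\alpha+\beta}\rho_{\min}^{-(\alpha+\beta)}\right)^p\sum_{T\in\triangle_0}\|P\|_{p,T}^p=\left(K_0(2d)^{\alpha+\beta}\rho_{\min}^{-(\alpha+\beta)}\right)^p\|P\|_{p,\Omega}^p.
\end{equation*}
Taking $p$-th roots and writing $K_0(2d)^{\alpha+\beta}\rho_{\min}^{-(\alpha+\beta)}=K_0(2/\rho_{\min})^{\alpha+\beta}\,d^{\alpha+\beta}$ isolates the factor $d^{\alpha+\beta}$, so the claim holds with $C_5:=K_0\max_{0\leq k\leq d}(2/\rho_{\min})^{k}$, which depends only on $d$ and $\partial\Omega$. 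The case $p=\infty$ follows the identical route with sums replaced by maxima: since $\|D_x^\alpha D_y^\beta P\|_{\infty,\Omega}=\max_{T\in\triangle_0}\|D_x^\alpha D_y^\beta P\|_{\infty,T}$, applying Lemma \ref{lem:markov} triangle-by-triangle, using $\rho_T\geq\rho_{\min}$ and $\max_{T}\|P\|_{\infty,T}=\|P\|_{\infty,\Omega}$ produces the same constant $C_5 d^{\alpha+\beta}$.

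The arithmetic here is routine; the genuine content, and the step I would be most careful about, is the first one. I need to justify that a triangulation using only the vertices of $\Omega$ exists for an arbitrary (possibly nonconvex) polygon, and to argue carefully that $\rho_{\min}$, and hence $C_5$, is intrinsic to $\partial\Omega$ and is not affected by any finer triangulation introduced later when scattered points are added as vertices for the quadrature. That separation between the fixed coarse triangulation used to obtain the Markov constant and the refined triangulation used to place quadrature points is the conceptual payoff that makes Marcinkiewicz--Zygmund inequalities attainable on polygons even though they fail in the domain-point quadrature of a single triangle.
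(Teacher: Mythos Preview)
Your proposal is correct and follows essentially the same route as the paper: fix a coarsest triangulation $\triangle_0$ of $\Omega$ using only the polygon's vertices, apply the triangle Markov inequality of Lemma~\ref{lem:markov} on each $T\in\triangle_0$, bound $\rho_T^{-1}$ uniformly by $(\min_{T\in\triangle_0}\rho_T)^{-1}$, and reassemble via the mesh-dependent relation \eqref{equ:meshdependent} (or a maximum for $p=\infty$). Your discussion of why this evades the obstruction in Remark~\ref{rem:limitation} is also on point and matches the paper's motivation.
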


\begin{proof}
Let $\triangle_0$ be a triangulation of $\Omega$ consisting of triangles with vertices being the vertices of $\Omega$ only. For $1\leq p<\infty$, by the Markov inequality on triangles we have 
$$\left\|D_x^\alpha D_y^\beta P\right\|_{p, \Omega}^p=\sum_{T'\in\triangle_0}\left\|D_x^\alpha D_y^\beta P\right\|_{p, T'}^p\leq \left[\frac{K_0(2d)^{\alpha+\beta}}{|\min_{T'\in\triangle_0}\rho_{T'}|^{\alpha+\beta}}\right]^p\sum_{T'\in\triangle_0}\left\|P\right\|_{p, T'}^p$$
and 
$$\sum_{T'\in\triangle_0}\left\|P\right\|_{p, T'}^p = \|\chi\|_{p,\Omega}^p,$$
where $\min_{T'\in\triangle_0}\rho_{T'}$ depends on the boundary of $\Omega$ but not on the size $|\triangle|$ of the triangulation $\triangle$.

For $p = \infty$, let $T_0\in\triangle_0$ be the triangle where $|D_x^\alpha D_y^\beta P|$ attains its maximum. Then
$$\left\|D_x^\alpha D_y^\beta P\right\|_{\infty, \Omega}
=\left\|D_x^\alpha D_y^\beta P\right\|_{\infty, T_0}\leq \frac{K_0(2d)^{\alpha+\beta}}{\rho_{T_0}^{\alpha+\beta}}\|P\|_{\infty,T_0}\leq\frac{K_0(2d)^{\alpha+\beta}}{\rho_{T_0}^{\alpha+\beta}}\|P\|_{\infty,\Omega},
$$
and note that $1/\rho_{T_0}\leq 1/\min_{T'\in\triangle_0}\rho_{T'}$, which is a constant depending only on the boundary of $\Omega$.
\end{proof}

For the geometry of the triangulation $\triangle$, we let $\gamma_{\triangle}\geq 1$ be the ratio of the longest edge length in the triangulation over $\rho_{T_{\min}}$, where $T_{\min}$ is the smallest-area triangle in $\triangle$. Then we have the following estimates:
\begin{lemma}\label{lem:number}
Let $\#\triangle$ be the number of triangles in $\triangle$. Then there exists constant $C_7>0$ depending only on boundary of $\Omega$ such that 
$$|\#\triangle|^{1-1/p}|\triangle|^2 \leq C_7|\triangle|^{2/p}.$$
\end{lemma}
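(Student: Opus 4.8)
The plan is to strip the $p$-dependence off the claim and reduce everything to a single, scale-free bound on the product $M:=\#\triangle\cdot|\triangle|^2$. First I would rewrite the left-hand side: since $2 = 2(1-1/p)+2/p$, we have
$(\#\triangle)^{1-1/p}|\triangle|^2 = (\#\triangle\cdot|\triangle|^2)^{1-1/p}\,|\triangle|^{2/p} = M^{1-1/p}|\triangle|^{2/p}$,
so after cancelling the common factor $|\triangle|^{2/p}$ the asserted inequality is exactly $M^{1-1/p}\le C_7$. As $p$ runs over $[1,\infty]$ the exponent $1-1/p$ ranges over $[0,1]$, and for any base one has $M^{1-1/p}\le\max\{1,M\}$. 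Hence it suffices to produce one constant $M_0$ with $M\le M_0$; then $C_7:=\max\{1,M_0\}$ serves all $p$ simultaneously. This reduction is the organizing idea: it concentrates all the geometry into bounding $\#\triangle\cdot|\triangle|^2$.

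The remaining estimate is an area count. Writing $A_\Omega:=\sum_{T\in\triangle}A_T$ for the area of $\Omega$, and letting $T_{\min}$ be the smallest-area triangle, I would use $A_\Omega\ge\#\triangle\cdot A_{T_{\min}}$. Since the inscribed disk of $T_{\min}$ lies inside it, $A_{T_{\min}}\ge\pi\rho_{T_{\min}}^2$ (any fixed positive multiple of $\rho_{T_{\min}}^2$ works, e.g.\ the sharp bound $3\sqrt3\,\rho_{T_{\min}}^2$). Combining these gives $\#\triangle\le A_\Omega/(\pi\rho_{T_{\min}}^2)$. Finally I would invoke the definition $\gamma_\triangle=|\triangle|/\rho_{T_{\min}}$, i.e.\ $\rho_{T_{\min}}=|\triangle|/\gamma_\triangle$, to convert inradius into mesh size, obtaining $\#\triangle\cdot|\triangle|^2\le\gamma_\triangle^2 A_\Omega/\pi=:M_0$. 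Feeding $M\le M_0$ back into the first step yields $C_7=\max\{1,\gamma_\triangle^2 A_\Omega/\pi\}$ and closes the proof.

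The step I expect to require the most care is the bookkeeping of what the final constant genuinely depends on, rather than any single computation. The area $A_\Omega$ is intrinsic to the region enclosed by $\partial\Omega$, which is consistent with a boundary-only dependence; the delicate factor is $\gamma_\triangle^2$, which measures the shape regularity of the triangulation (longest edge against the smallest inradius) and is not a property of $\partial\Omega$ alone. I would therefore either record the constant explicitly as $C_7=\max\{1,\gamma_\triangle^2 A_\Omega/\pi\}$ and note that it is boundary-determined once $\gamma_\triangle$ is regarded as fixed for the given triangulation, or assume a uniform bound on $\gamma_\triangle$ across the family of refinements so that the stated boundary-only dependence is literally correct. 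Apart from this dependence-tracking, the area count and the exponent-interpolation argument are both routine, and the whole proof follows quickly once the reduction to bounding $\#\triangle\cdot|\triangle|^2$ is made.
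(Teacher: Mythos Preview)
Your proposal is correct and follows essentially the same route as the paper: both bound $\#\triangle$ by $A_\Omega/(\pi\rho_{T_{\min}}^2)$ via the inscribed-disk area and then convert $\rho_{T_{\min}}$ to $|\triangle|$ through $\gamma_\triangle$. Your reduction to a single $p$-free quantity $M=\#\triangle\cdot|\triangle|^2$ is a slightly cleaner packaging than the paper's direct computation (which leaves a $p$-dependent factor $C_6^{1-1/p}\gamma_\triangle^{2-2/p}$), and your explicit discussion of how $\gamma_\triangle$ enters the constant is more candid than the paper, which simply asserts boundary dependence without comment.
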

\begin{proof}
It follows from $|\#\triangle|\leq {|\Omega|}/{A_{T_{\min}}}\leq {|\Omega|}/{\pi\rho_{T_{\min}}^2}\leq C_6/\rho_{T_{\min}}^2$
and
$$|\#\triangle|^{1-1/p}|\triangle|^2 
\leq C_6^{1-1/p}\frac{|\triangle|^{2-2/p}}{\rho_{T_{\min}}^{2-2/p}}|\triangle|^{2/p} = C_6^{1-1/p}\gamma_{\triangle}^{2-2/p}|\triangle|^{2/p},
$$
where $C_6$, $C_7$, and $\gamma_{\triangle}^{2-2/p}$ depend on the boundary of $\Omega$.
\end{proof}

We first investigate the exactness degree and error bound of the proposed quadrature rule \eqref{equ:quadP} on polygons.

\begin{theorem}[Quadrature on polygons]\label{thm:quadP}
Let $\Omega$ be a polygon in $\mathbb{R}^2$ and $\{(x_j,y_j)\}_{j=1}^m$ on $\Omega$ a set of scattered points in $\Omega$ with vertices of $\Omega$ included. Let $\triangle$ be a triangulation of $\Omega$ generated from $\{(x_j,y_j)\}_{j=1}^m$. Then the quadrature rule \eqref{equ:quadP} in quadrature points $\{(x_j,y_j)\}_{j=1}^m$ with quadrature weights \eqref{equ:weightsP} is exact for all constant and linear polynomials, that is,
$$\sum_{j=1}^mw_{j}\chi(x_j,y_j) = \int_{\Omega}\chi(x,y)dxdy\quad \forall \chi\in\mathbb{P}_1.$$
Besides, for any $f\in W^{2,p}(\Omega)$ with $1\leq p\leq \infty$, the quadrature rule \eqref{equ:quadP} satisfies
$$\left|\int_{\Omega}f(x,y)dxdy - \sum_{j=1}^mw_{j}f(x_j,y_j)\right|\leq K_1C_7\max_{T\in\triangle}|A_T|^{1/q}|f|_{2, p, \Omega},$$
where $1/p+1/q =1$ and $K_1,C_7>0$ are constants given in Lemmata \ref{lem:main} and \ref{lem:number}, respectively.
\end{theorem}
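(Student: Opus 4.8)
The plan is to observe that the polygon rule \eqref{equ:quadP}--\eqref{equ:weightsP} is exactly the triangle-by-triangle assembly of the $d=1$ rule from Section~\ref{sec:triangle}, and then reduce both assertions to the per-triangle results already established. Since for $d=1$ the matrix $\bm{B}$ is the identity \eqref{equ:Bidentity}, every weight in \eqref{equ:weightsT} equals $A_T/3$ (because $\binom{3}{2}=3$ and each column sum of $\bm{B}^{-1}$ is $1$), and the three domain points \eqref{equ:domainpts} are precisely the vertices of $T$. Writing the vertices of $T$ as $v_1^T,v_2^T,v_3^T$, the single-triangle rule \eqref{equ:quadT} is therefore $\tfrac{A_T}{3}\sum_{\ell=1}^3 f(v_\ell^T)$, and regrouping the global sum by triangles gives
\[
\sum_{j=1}^m w_j f(x_j,y_j)=\sum_{T\in\triangle}\frac{A_T}{3}\sum_{\ell=1}^3 f(v_\ell^T).
\]
I would record this identity first, as it turns both claims into statements about sums of single-triangle rules.

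For exactness I would combine the additivity of the integral over the triangulation, $\int_\Omega\chi=\sum_{T\in\triangle}\int_T\chi$, with Theorem~\ref{thm:quad} at $d=1$, which gives $\int_T\chi=\tfrac{A_T}{3}\sum_{\ell}\chi(v_\ell^T)$ for every $\chi\in\mathbb{P}_1$. Summing over $T$ and invoking the regrouping identity above yields $\int_\Omega\chi=\sum_j w_j\chi(x_j,y_j)$, which settles the first part.

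For the error bound the starting point is the same decomposition: the global error splits as $E=\sum_{T\in\triangle}\bigl(\int_T f-\tfrac{A_T}{3}\sum_\ell f(v_\ell^T)\bigr)$, and Theorem~\ref{thm:quaderrorT} with $d=1$ bounds each summand by $K_1|A_T|^{1/q}|T|^{2}|f|_{2,p,T}$. I would then extract the factors $\max_{T}|A_T|^{1/q}$ and $|T|^2\le|\triangle|^2$, reducing matters to controlling $\sum_{T}|f|_{2,p,T}$. Here the two regimes diverge: for $1\le p<\infty$ the mesh-dependent relation \eqref{equ:meshdependent} gives $\sum_T|f|_{2,p,T}^p=|f|_{2,p,\Omega}^p$, so the elementary Hölder bound $\|a\|_1\le n^{1-1/p}\|a\|_p$ with $n=\#\triangle$ produces $\sum_T|f|_{2,p,T}\le(\#\triangle)^{1-1/p}|f|_{2,p,\Omega}$; for $p=\infty$ one instead uses $|f|_{2,\infty,T}\le|f|_{2,\infty,\Omega}$ termwise to get $\sum_T|f|_{2,\infty,T}\le(\#\triangle)\,|f|_{2,\infty,\Omega}$. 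In both cases the accumulated geometric factor is $(\#\triangle)^{1-1/p}|\triangle|^2$, which Lemma~\ref{lem:number} replaces by $C_7|\triangle|^{2/p}$, giving the asserted estimate (the residual $|\triangle|^{2/p}$ equals $1$ at $p=\infty$ and is $\le 1$ on a normalized/refined mesh, so it only strengthens the bound).

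The main obstacle I anticipate is the aggregation step rather than either local estimate. The per-triangle bounds carry the dimensional factors $|T|^2$ and $|A_T|^{1/q}$, and one must pass from the sum of local seminorms $\sum_T|f|_{2,p,T}$ to the global seminorm $|f|_{2,p,\Omega}$ without acquiring an uncontrolled power of $\#\triangle$; this is precisely what Lemma~\ref{lem:number} is built to absorb. The crux is thus to arrange the Hölder step together with \eqref{equ:meshdependent} so that the combination $(\#\triangle)^{1-1/p}|\triangle|^2$ surfaces cleanly and can be traded for $C_7|\triangle|^{2/p}$, while treating the $p=\infty$ endpoint separately because the $\ell^p$-additivity in \eqref{equ:meshdependent} degenerates there.
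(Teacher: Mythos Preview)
Your proposal is correct and follows essentially the same route as the paper: regroup the global sum as a sum over triangles of the $d=1$ rule, invoke Theorem~\ref{thm:quad} for exactness, and sum the per-triangle error bounds from Theorem~\ref{thm:quaderrorT}. The only divergence is the aggregation step: the paper does not split cases or use the H\"older inequality, but simply bounds each $|f|_{2,p,T}$ by $|f|_{2,p,\Omega}$ to get $\sum_{T}|f|_{2,p,T}\le(\#\triangle)\,|f|_{2,p,\Omega}$ for all $1\le p\le\infty$, and then applies the $p=\infty$ instance of Lemma~\ref{lem:number}, namely $\#\triangle\cdot|\triangle|^2\le C_7$; this avoids your residual factor $|\triangle|^{2/p}$ and reproduces the stated bound verbatim without any assumption on the size of $|\triangle|$.
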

\begin{proof}
This is an immediate consequence of Theorems \ref{thm:quad} and \ref{thm:quaderrorT}, and the fact of $\Omega = \bigcup_{T\in\triangle}T$. Specifically, note that 
$$\sum_{j=1}^mw_{j}\chi(x_j,y_j) = \sum_{T\in\triangle} \sum_{i+j+k=1}w_{ijk}\chi(\xi_{ijk})$$
and 
$$\int_{\Omega}\chi(x,y)dxdy = \sum_{T\in\triangle} \int_{T}\chi(x,y)dxdy.$$
Thus the exactness result follows from Theorem \ref{thm:quad} for $d=1$. Similarly, note that 
$$\int_{\Omega}f(x,y)dxdy - \sum_{j=1}^mw_{j}f(x_j,y_j) = \sum_{T\in\triangle} \left[\int_Tf(x,y)dxdy - \sum_{i+j+k=1}w_{ijk}f(\xi_{ijk})\right].$$
The error estimate then follows from Theorem \ref{thm:quaderrorT} that
$$\left|\int_{\Omega}f(x,y)dxdy - \sum_{j=1}^mw_{j}f(x_j,y_j)\right|\leq |\#\triangle|K_1\max_{T\in\triangle}|A_T|^{1/q}|\triangle|^{2}|f|_{2, p, \Omega},$$
and Lemma \ref{lem:number} that $|\#\triangle||\triangle|^2 \leq C_7$.
\end{proof}

We then establish the following Marcinkiewicz--Zygmund inequalities for any polynomial $\chi\in\mathbb{P}_N$ with $N>1$.

\begin{theorem}[MZ on polygons]\label{thm:MZpolygon}
Let $\Omega$ be a polygon in $\mathbb{R}^2$ and $\{(x_j,y_j)\}_{j=1}^m$ a set of scattered points in $\Omega$ with vertices of $\Omega$ included. Let $\triangle$ be a triangulation of $\Omega$ generated from $\{(x_j,y_j)\}_{j=1}^m$. The quadrature rule \eqref{equ:quadP} in quadrature points $\{(x_j,y_j)\}_{j=1}^m$ with quadrature weights \eqref{equ:weightsP} satisfies following Marcinkiewicz--Zygmund inequalities:

(a) Let $1\leq p<\infty$. There exists a constant $c_1>0$ depending only on $p$ and the boundary of $\Omega$ such that for any $N>0$, if $N^2|\triangle|^{2/p}<1/c_1$, then
for $\chi\in\mathbb{P}_N$,
\begin{equation}\label{equ:MZLp3}
\left|\sum_{j=1}^mw_j|\chi(x_j,y_j)|^p-\int_{\Omega}|\chi(x,y)|^pdxdy\right|\leq \eta \int_{\Omega}|\chi(x,y)|^pdxdy,
\end{equation}
where 
$$\eta = \max\left\{\frac{K_0^p}{3}\left[|\#\triangle|^{1-1/p}+c_1N^2 |\triangle|^{2/p}\right]^p-1,
1-\frac{(1 - c_1N^2|\triangle|^{2/p})^p}{3|\#\triangle|^{p-1}}
\right\}.$$

(b) There exists a constant $c_2>0$ depending on the boundary of $\Omega$ such that for any $\eta>0$, if 
$$N\leq c_2\min\left\{\frac{1}{|\triangle|},\frac{1}{|\triangle|^{1/2}}\right\}\eta^{1/2},$$ 
then for $\chi\in\mathbb{P}_N$, 
\begin{equation}\label{equ:MZpolygoninf}
\left|\max_{1\leq j \leq m}|\chi(x_j,y_j)| -\|\chi\|_{\infty,\Omega}\right|\leq \eta\|\chi\|_{\infty,\Omega}.
\end{equation}
\end{theorem}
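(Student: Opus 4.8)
The plan is to reduce the polygon quadrature to a sum of per-triangle quadratures and then invoke the $d=1$ specialization of Theorem \ref{thm:MZ2}. First I would observe that, by the definition \eqref{equ:weightsP} of $w_j$ and the fact that $\Omega=\bigcup_{T\in\triangle}T$,
$$\sum_{j=1}^m w_j|\chi(x_j,y_j)|^p=\sum_{T\in\triangle}\sum_{i+j+k=1}w_{ijk}|\chi(\xi_{ijk})|^p,$$
so the problem localizes to each triangle. For $d=1$ the matrix $\bm B$ is the identity, the weights are $w_{ijk}=A_T/3$, and $C_1=C_2=1/3$, $C_3=1$ (with $K_0=1$ when $p=\infty$). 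From the proof of Theorem \ref{thm:MZ1}, the per-triangle sum is squeezed between $\tfrac13\|P_T\|_{p,T}^p$ and $\tfrac{K_0^p}{3}\|P_T\|_{p,T}^p$, where $P_T$ is the linear interpolant of $\chi$ on $T$; combining this with $\|\chi-P_T\|_{p,T}\le e_T:=K_1|T|^2|\chi|_{2,p,T}$ from Lemma \ref{lem:main} yields the two-sided per-triangle estimate in terms of $(\|\chi\|_{p,T}\pm e_T)$.

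For the upper bound with $1\le p<\infty$ I would sum the per-triangle upper estimate and use superadditivity $\sum_T c_T^p\le(\sum_T c_T)^p$ (valid for $p\ge1$, $c_T\ge0$) to get $\sum_j w_j|\chi(x_j,y_j)|^p\le\tfrac{K_0^p}{3}(\sum_T\|P_T\|_{p,T})^p$. The sum $\sum_T\|P_T\|_{p,T}\le\sum_T\|\chi\|_{p,T}+\sum_T e_T$ is then controlled termwise: H\"older gives $\sum_T\|\chi\|_{p,T}\le|\#\triangle|^{1-1/p}\|\chi\|_{p,\Omega}$, while for the error sum I would bound $|T|\le|\triangle|$, apply H\"older again, convert $|\#\triangle|^{1-1/p}|\triangle|^2$ into $C_7|\triangle|^{2/p}$ via Lemma \ref{lem:number}, and replace $|\chi|_{2,p,\Omega}$ by $3^{1/p}C_5N^2\|\chi\|_{p,\Omega}$ using the polygon Markov inequality (Lemma \ref{lem:Markovpolygons}) with degree $N$. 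This produces $\sum_T e_T\le c_1N^2|\triangle|^{2/p}\|\chi\|_{p,\Omega}$ with $c_1=K_1C_7C_53^{1/p}$, and hence exactly the upper part of $\eta$.

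The lower bound is the delicate step, and is precisely where the per-triangle viewpoint of Remark \ref{rem:limitation} fails: on a single skinny triangle $e_T$ may exceed $\|\chi\|_{p,T}$, so $(\|\chi\|_{p,T}-e_T)^p$ is not usable termwise. The fix is to work globally. I would apply the power-mean (Jensen) inequality $\sum_T c_T^p\ge|\#\triangle|^{1-p}(\sum_T c_T)^p$ to the \emph{nonnegative} quantities $c_T=\|P_T\|_{p,T}$, which sidesteps any sign issue, and only afterwards use the reverse triangle inequality $\|P_T\|_{p,T}\ge\|\chi\|_{p,T}-e_T$ inside the first-power sum together with $\sum_T\|\chi\|_{p,T}\ge\|\chi\|_{p,\Omega}$ (the $\ell^1\ge\ell^p$ bound) and the same estimate $\sum_T e_T\le c_1N^2|\triangle|^{2/p}\|\chi\|_{p,\Omega}$. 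Here negative contributions from bad triangles are absorbed globally rather than triangle by triangle, and the hypothesis $N^2|\triangle|^{2/p}<1/c_1$ guarantees $1-c_1N^2|\triangle|^{2/p}>0$. This gives $\sum_j w_j|\chi(x_j,y_j)|^p\ge\tfrac{(1-c_1N^2|\triangle|^{2/p})^p}{3|\#\triangle|^{p-1}}\|\chi\|_{p,\Omega}^p$, the lower part of $\eta$; taking the maximum of the two relative deviations yields \eqref{equ:MZLp3}.

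For $p=\infty$ the argument is parallel but simpler, since $K_0=C_3=1$ and $K_1$ depends only on $d$. Writing $\max_{1\le j\le m}|\chi(x_j,y_j)|=\max_{T\in\triangle}\max_{i+j+k=1}|\chi(\xi_{ijk})|$ and applying \eqref{equ:MZIinf2} on each triangle, the upper bound follows from $\|\chi\|_{\infty,T}\le\|\chi\|_{\infty,\Omega}$, while for the lower bound I would pick the triangle $T^\ast$ attaining $\|\chi\|_{\infty,\Omega}$ so that $\max_{j}|\chi(x_j,y_j)|\ge\|\chi\|_{\infty,T^\ast}-K_1|T^\ast|^2|\chi|_{\infty,2,T^\ast}$. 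In both directions the deviation is at most $K_1|\triangle|^2|\chi|_{\infty,2,\Omega}\le K_1C_5N^2|\triangle|^2\|\chi\|_{\infty,\Omega}$ by the polygon Markov inequality, and the stated condition $N\le c_2\min\{1/|\triangle|,1/|\triangle|^{1/2}\}\eta^{1/2}$ (which controls $N^2|\triangle|^2$, and a fortiori the coarser regime) forces this deviation below $\eta\|\chi\|_{\infty,\Omega}$, giving \eqref{equ:MZpolygoninf}. The main obstacle throughout is the lower bound for finite $p$: keeping the geometric constants independent of individual triangles forces the global power-mean argument, and the interplay $|\#\triangle|^{1-1/p}|\triangle|^2\le C_7|\triangle|^{2/p}$ of Lemma \ref{lem:number} is what makes the error term shrink with the mesh.
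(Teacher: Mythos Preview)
Your proposal is correct and, for part (a), follows essentially the same route as the paper: localize to triangles via the $d=1$ case of Theorem \ref{thm:MZ2}, pass between $\sum_T\texttt{Mid}_T^p$ and $(\sum_T\texttt{Mid}_T)^p$ via the $\ell_1$--$\ell_p$ norm equivalence, and close with Lemma \ref{lem:number} and the polygon Markov inequality. Your emphasis on applying the power-mean inequality to the nonnegative quantities $\|P_T\|_{p,T}$ before invoking the reverse triangle inequality is exactly the mechanism the paper uses (with $\texttt{Mid}_T$ in place of $\|P_T\|_{p,T}$), though the paper is less explicit about the sign issue.

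For part (b) your lower bound differs from the paper's and is in fact cleaner. The paper, after picking the triangle $T_2$ where $\|\chi\|_{\infty,\Omega}$ is attained, does a first-order Taylor expansion of $\chi$ about a vertex of $T_2$; this produces an additional term $\sim|\triangle|\,|\chi|_{\infty,1,T_2}$ alongside the $|\triangle|^2|\chi|_{\infty,2,T_2}$ term, and after Markov yields a deviation $\sim N^2|\triangle|$ (rather than $N^2|\triangle|^2$), which is the origin of the $\min\{1/|\triangle|,1/|\triangle|^{1/2}\}$ in the hypothesis. You instead apply the lower inequality of \eqref{equ:MZIinf2} directly on $T^\ast$, obtaining only the $|\triangle|^2$ term; this gives a slightly stronger sufficient condition $N^2|\triangle|^2\lesssim\eta$, which is implied by the stated hypothesis, so the theorem follows as written.
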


\begin{proof}
For $p=\infty$, the estimate \eqref{equ:MZIp2} in Theorem \ref{thm:MZ2} implies, for each $T\in \triangle$,
\begin{equation*}
\begin{split}
\|\chi\|_{p,T}-K_1|T|^{2}|\chi|_{p,2,T}&\leq\underbrace{\left[3\sum_{i+j+k=1}^mw_{ijk}|\chi(\xi_{ijk})|^p\right]^{1/p}}_{\texttt{Mid}_T}\leq K_0\left(\|\chi\|_{p,T}+K_1|\triangle|^{2}|\chi|_{p,2,T}\right),
\end{split}\end{equation*}
where $K_0>0$ is some generic constant and $K_1>0$ is a constant depending only on $p$, since $d=1$.

From the mesh-dependent relation \eqref{equ:meshdependent} we have 
$$\|f\|_{p,\Omega}\leq \sum_{T\in\triangle}\|f\|_{p,T}.$$
If we regard $ \sum_{T\in\triangle}\|f\|_{p,T}$ as the $\ell_1$-vector norm of length $\#\triangle$, then
$$\sum_{T\in\triangle}\|f\|_{p,T}\leq |\#\triangle|^{1-1/p}\left[\sum_{T\in\triangle} \|f\|_{p,T}^p\right]^{1/p} = |\#\triangle|^{1-1/p}\|f\|_{p,\Omega}.$$
Similarly, this kind of estimates holds for the Sobolev semi-norm $|\cdot|_{p,2,T}$. Therefore,
\begin{equation}\label{equ:Mid1}
\begin{split}
\sum_{T\in\triangle}\texttt{Mid}_T 
&\leq K_0\left[\sum_{T\in\triangle}\|\chi\|_{p,T} + K_1|\triangle|^{2}\sum_{T\in\triangle}|\chi|_{p,2,T} \right]\\
&\leq K_0 |\#\triangle|^{1-1/p}(\|\chi\|_{p,\Omega}+K_1|\triangle|^{2}|\chi|_{p,2,\Omega})\\
& \leq K_0 |\#\triangle|^{1-1/p}\|\chi\|_{p,\Omega}+K_0K_1C_7 |\triangle|^{2/p}|\chi|_{p,2,\Omega},
\end{split}
\end{equation}
where the last equality is due to Lemma \ref{lem:number}. Conversely,
\begin{equation}\label{equ:Mid2}
\begin{split}
\sum_{T\in\triangle}\texttt{Mid}_T &\geq \sum_{T\in\triangle}\|\chi\|_{p,T} - K_1|\triangle|^{2}\sum_{T\in\triangle}|\chi|_{p,2,T} \\
&\geq \|\chi\|_{p,\Omega} - K_1 |\#\triangle|^{1-1/p}|\triangle|^{2}|\chi|_{p,2,\Omega}\\
&\geq \|\chi\|_{p,\Omega} - K_1 C_7|\triangle|^{2/p}|\chi|_{p,2,\Omega}.
\end{split}
\end{equation}

Note that
$$3\sum_{j=1}^mw_j|\chi(x_j,y_j)|^p = \sum_{T\in\triangle}\texttt{Mid}_T^p.$$
We play the same vector-norm trick for $\sum_{T\in\triangle}\texttt{Mid}_T$. For $d=1$, $w_{ijk}=A_T/3$. Therefore,
\begin{equation}\label{equ:Mid3}
\left[\sum_{T\in\triangle}\texttt{Mid}_T^p\right]^{1/p}\leq\sum_{T\in\triangle}\texttt{Mid}_T \leq |\#\triangle|^{1-1/p}\left[\sum_{T\in\triangle}\texttt{Mid}_T^p\right]^{1/p}.
\end{equation}

Together with \eqref{equ:Mid1} and \eqref{equ:Mid3}, we have
$$\left[\sum_{j=1}^mw_j|\chi(x_j,y_j)|^p\right]^{1/p}
\leq\frac{1}{3^{1/p}}\left(K_0 |\#\triangle|^{1-1/p}\|\chi\|_{p,\Omega}+K_0K_1C_7 |\triangle|^{2/p}|\chi|_{p,2,\Omega}\right),$$
and by Lemma \ref{lem:Markovpolygons} we have
\begin{equation}\label{equ:inter1}
\left[\sum_{j=1}^mw_j|\chi(x_j,y_j)|^p\right]^{1/p}\leq
\frac{1}{3^{1/p}}\left(K_0 |\#\triangle|^{1-1/p}+K_0K_1C_5C_7N^2 |\triangle|^{2/p}\right)\|\chi\|_{p,\Omega}.
\end{equation}
Together with \eqref{equ:Mid2} and \eqref{equ:Mid3}, we have
$$\left[\sum_{j=1}^mw_j|\chi(x_j,y_j)|^p\right]^{1/p}\geq \frac{1}{3^{1/p}|\#\triangle|^{1-1/p}}\left(\|\chi\|_{p,\Omega} - K_1 C_7|\triangle|^{2/p}|\chi|_{p,2,\Omega}\right),$$
and by Lemma \ref{lem:Markovpolygons}, we have 
\begin{equation}\label{equ:inter2}
\left[\sum_{j=1}^mw_j|\chi(x_j,y_j)|^p\right]^{1/p}\geq \frac{1 - K_1C_5C_7N^2|\triangle|^{2/p}}{3^{1/p}|\#\triangle|^{1-1/p}} \|\chi\|_{p,\Omega},
\end{equation}
where $N^2|\triangle|^{2/p}<1/c_1$ with $c_1:=K_1C_5C_7$ guarantees the positivity of the lower bound.

Combining with \eqref{equ:inter1} and \eqref{equ:inter2}, we have the inequality \eqref{equ:MZLp3} provided that 
\begin{equation*}\max\left\{\frac{\left(K_0 |\#\triangle|^{1-1/p}+K_0c_1N^2 |\triangle|^{2/p}\right)^p}{3}-1,
1-\frac{(1 - c_1N^2|\triangle|^{2/p})^p}{3|\#\triangle|^{p-1}}
\right\}\leq \eta. 
\end{equation*}

For $p=\infty$, the estimate \eqref{equ:MZIinf2} in Theorem \ref{thm:MZ2} implies, for each $T\in\triangle$,
\begin{equation*}
\|\chi\|_{\infty,T}-K_1|T|^{2}|\chi|_{\infty,2,T}\leq\max_{i+j+k=1}|\chi(\xi_{ijk})|\leq \|\chi\|_{\infty,T}+K_1|T|^{2}|\chi|_{\infty,2,T}.
\end{equation*}
Let $T_1$ be a triangle where $|\chi(x_j,y_j)|$ attains its maximum. Then it is immediate to estimate the upper bound in the form of 
$$\max_{1\leq j \leq m}|\chi(x_j,y_j)|\leq \|\chi\|_{\infty,T_1}+K_1|T_1|^{2}|\chi|_{\infty,2,T_1}\leq \|\chi\|_{\infty,\Omega}+K_1|\triangle|^{2}|\chi|_{\infty,2,T_1}.$$
For the lower bound, let $T_2$ be the triangle where $|\chi(x,y)|$ attains its maximum such that $\|\chi\|_{\infty,\Omega} =\|\chi\|_{\infty,T_2} = |\chi(x^*,y^*)|$ with $(x^*,y^*)\in T_2,$
then
$$\|\chi\|_{\infty,\Omega} - K_1|T_2|^2|\chi|_{\infty,2,T_2}\leq |\chi(x^*,y^*)|.$$ 
Let $(x_J,y_J)$ be a vertex of $T_2$, and we expand $\chi(x^*,y^*)$ at $(x_J,y_J)$:
$$\chi(x^*,y^*) = \chi(x_J,y_J) + \sum_{\alpha+\beta=1}D^{\alpha+\beta}\chi(v_j)|x^*-x_J|^{\alpha}|y^*-y_J|^{\beta},$$
where $v_j$ is some point between $(x^*,y^*)$ and $(x^J,y^J)$. Then 
$$|\chi(x^*,y^*) - \chi(x_J,y_J)|\leq 2|T_2||\chi|_{\infty,1,T_2}$$
and
$$|\chi(x^*,y^*)| - 2|T_2||\chi|_{\infty,1,T_2}\leq |\chi(x_J,y_J)|\leq \max_{1\leq j \leq m}|\chi(x_j,y_j)|.$$
Thus we have
\begin{equation*}\begin{split}
\max_{1\leq j \leq m}|\chi(x_j,y_j)|&\geq \|\chi\|_{\infty,\Omega} - K_1|T_2|^2|\chi|_{\infty,2,T_2} -2|T_2||\chi|_{\infty,1,T_2}\\
&\geq \|\chi\|_{\infty,\Omega} - C_4|\triangle||\chi|_{\infty,2,T_2}
\end{split}\end{equation*}

for some generic constant $C_4>0$. Therefore, we have
$$\|\chi\|_{\infty,\Omega} - C_4|\triangle||\chi|_{\infty,2,T_2}\leq\max_{1\leq j \leq m}|\chi(x_j,y_j)|\leq \|\chi\|_{\infty,\Omega}+K_1|\triangle|^{2}|\chi|_{\infty,2,T_1},$$
and by Lemma \ref{lem:Markovpolygons}, both Sobolev semi-norms can be bounded above by $C_5\|\chi\|_{\infty,\Omega}$. Therefore we have the inequality
$$(1 - C_4C_5N^2|\triangle|)\|\chi\|_{\infty,\Omega}\leq\max_{1\leq j \leq m}|\chi(x_j,y_j)|\leq (1+K_1C_5N^2|\triangle|^{2})\|\chi\|_{\infty,\Omega}$$
and hence the inequality \eqref{equ:MZpolygoninf}.
\end{proof}


\begin{corollary}[MZ for scattered data on triangles]
Let $T$ be a triangle in $\mathbb{R}^2$ and $\{(x_j,y_j)\}_{j=1}^m$ a set of scattered points in $T$ with vertices of $T$ included. Let $\triangle$ be a triangulation of $T$ generated from $\{(x_j,y_j)\}_{j=1}^m$. The quadrature rule \eqref{equ:quadP} in quadrature points $\{(x_j,y_j)\}_{j=1}^m$ with quadrature weights \eqref{equ:weightsP} satisfies the Marcinkiewicz--Zygmund inequalities established in Theorem \ref{thm:MZpolygon}.
\end{corollary}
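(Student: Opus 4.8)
The plan is to observe that a triangle is the simplest instance of a polygon, so the corollary should follow by applying Theorem~\ref{thm:MZpolygon} verbatim with the choice $\Omega = T$. First I would note that the boundary $\partial T$ consists of exactly three line segments, so $T$ qualifies as a (regular or irregular) polygon in the sense used in Section~\ref{sec:polygons}. The hypotheses of Theorem~\ref{thm:MZpolygon} then transfer directly: the scattered set $\{(x_j,y_j)\}_{j=1}^m$ contains the three vertices of $T$, which are precisely the vertices of the polygon $\Omega=T$; the triangulation $\triangle$ is generated from these points; and the quadrature rule \eqref{equ:quadP} with weights \eqref{equ:weightsP} is defined exactly as in the polygonal case. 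Consequently the conclusions (a) and (b) of Theorem~\ref{thm:MZpolygon} hold with $\Omega=T$, which is exactly the assertion of the corollary.

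The only thing requiring verification is that every ingredient feeding the proof of Theorem~\ref{thm:MZpolygon} remains valid and nondegenerate when $\Omega$ is a single triangle. The two such ingredients are Lemma~\ref{lem:Markovpolygons} and Lemma~\ref{lem:number}. For the Markov inequality on polygons, the coarsest triangulation $\triangle_0$ of $\Omega=T$ whose vertices are only the vertices of $\Omega$ is the singleton $\{T\}$; hence $\min_{T'\in\triangle_0}\rho_{T'}=\rho_T$, and the constant $C_5$ is finite and depends only on $\rho_T$, i.e.\ on the boundary of $T$. Likewise, the bounds in Lemma~\ref{lem:number} specialize with $|\Omega|=A_T$ and the corresponding inradius, so $C_6$, $C_7$, and $\gamma_\triangle$ are again controlled by the fixed geometry of $T$. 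Since these are the only facts about $\Omega$ invoked in the proof of Theorem~\ref{thm:MZpolygon}, the entire chain of estimates \eqref{equ:Mid1}--\eqref{equ:inter2} and its $p=\infty$ counterpart carries over without change.

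I would then stress the conceptual point that makes the corollary worth recording, contrasting it with Theorems~\ref{thm:MZ1}--\ref{thm:MZ2} and Remark~\ref{rem:limitation}. The Markov constant $C_5$ here is attached to the \emph{fixed} triangle $T$ through $\rho_T$, not to the refined triangulation $\triangle$. Therefore, as one adds more scattered points and lets $|\triangle|\to 0$, the controlling factor $N^2|\triangle|^{2/p}$ (respectively $N^2|\triangle|$ for $p=\infty$) can be driven to zero while $C_5$ stays put, so the Marcinkiewicz--Zygmund deviation $\eta$ genuinely shrinks toward $0$. This is in sharp contrast to the domain-point quadrature on a single triangle, where the unavoidable shape parameter $\kappa_T\geq 2\sqrt{3}$ prevents the constants from approaching $1$.

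The main obstacle is mild and conceptual rather than computational: one must confirm that treating $T$ as a polygon does not degenerate any constant, i.e.\ that $\triangle_0=\{T\}$ is an admissible coarsest triangulation and that $\rho_T>0$ so that $C_5$, $C_6$, $C_7$ remain finite. Since $T$ is a nondegenerate triangle with $\rho_T>0$, this is immediate, and the corollary follows directly from Theorem~\ref{thm:MZpolygon}.
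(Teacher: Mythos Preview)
Your proposal is correct and matches the paper's approach: the paper states the corollary without proof, treating it as an immediate specialization of Theorem~\ref{thm:MZpolygon} to the case $\Omega=T$. Your verification that the auxiliary constants in Lemmata~\ref{lem:Markovpolygons} and~\ref{lem:number} remain finite when $\triangle_0=\{T\}$ is a sound elaboration of what the paper leaves implicit.
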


\section{Numerical results}\label{sec:numerical}

For integrals over triangles and polygons, we test the following integrands
\begin{equation*}
\begin{split}
&f_1(x,y)=  (x+2y-7)^2 + (2x+y-5)^2,\\
&f_2(x,y) = 100\sqrt{|y-0.01x^2|} + 0.01|x+10|,\\
&f_3(x,y) =\sin(x+y) + (x-y)^2 -1.5x+2.5y+1
\end{split}
\end{equation*}
on a right triangle $\{(0,0),(0,1),(1,0)\}$ and a polygonal approximation of the map of Georgia, USA. We report the relative error defined as 
$$\texttt{relative error} = 
\frac{\left|\texttt{exact integral} - \texttt{quadrature evaluation}\right|}{\left|\texttt{exact integral}\right|}.$$
The exact integral is evaluated using a high-order, fine-mesh spline method.

For integration on the triangle, we use domain points \eqref{equ:domainpts} as quadrature points with corresponding weights \eqref{equ:weightsT}. The errors of quadrature rule \eqref{equ:quadT} with varying $d$ are reported in Table \ref{tab:tab1}. These errors decay as $d$ grows. Moreover, the errors with respect to $f_1(x,y)$, which is a polynomial of total degree 2, confirm the quadrature exactness of the rule \eqref{equ:quadT}.

\begin{table}[htbp]
\centering
\begin{tabular}{c|ccc}
\hline
$d$ & $f_1(x,y)$ & $f_2(x,y)$ &  $f_3(x,y)$ \\\hline
 1 & \texttt{1.9108e-02}& \texttt{3.0819e-01} & \texttt{2.1816e-01} \\\hline
 3 & \texttt{5.1213e-13}& \texttt{3.4226e-02} & \texttt{3.9214e-05} \\\hline
 5 & \texttt{5.1213e-13}& \texttt{9.1215e-03} & \texttt{7.2652e-08} \\\hline
 7 & \texttt{5.1132e-13}& \texttt{2.6919e-03} & \texttt{9.6764e-11} \\\hline
 9 & \texttt{5.0304e-13}& \texttt{3.6071e-04} & \texttt{4.1656e-13} \\\hline
11 & \texttt{4.5172e-13}& \texttt{6.3396e-04} & \texttt{4.7507e-13}\\\hline
\end{tabular} 
\caption{Relative error for the quadrature \eqref{equ:quadT} on a triangle.}
\label{tab:tab1}
\end{table}

For integration on the Georgia-like polygon, we select a sequence of points along the boundary, marked as red bullets. If no interior points are included, the weights \eqref{equ:weightsP} at these boundary points are labeled in the first plot of Figure \ref{fig:GA}. We can also incorporate some interior points, such as those representing Atlanta, Athens, and Macon. These three points are depicted as blue squares, and the new weights for all points are labeled in the second plot of Figure \ref{fig:GA}. The triangulation using all points is displayed in the third plot.

\begin{figure}[htbp]
\centering
\includegraphics[width = 0.3\textwidth]{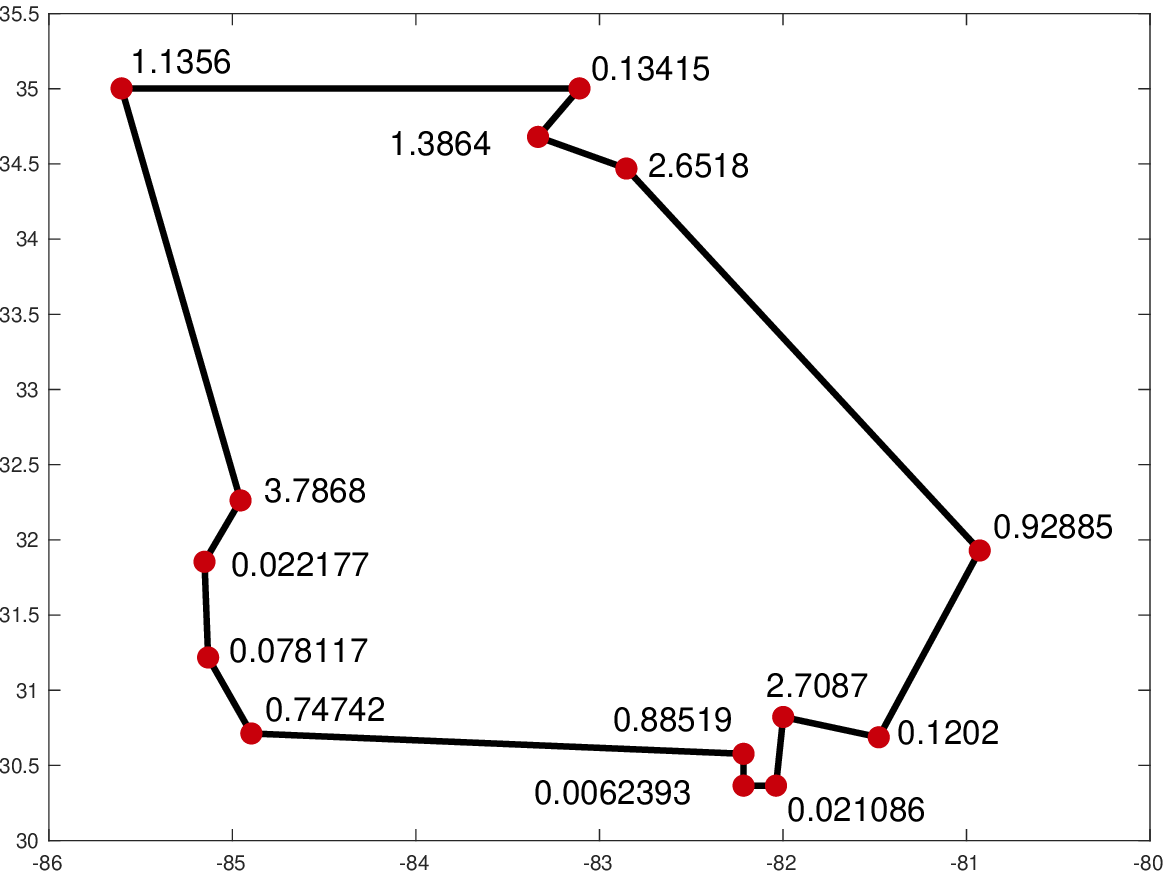}\quad\includegraphics[width = 0.3\textwidth]{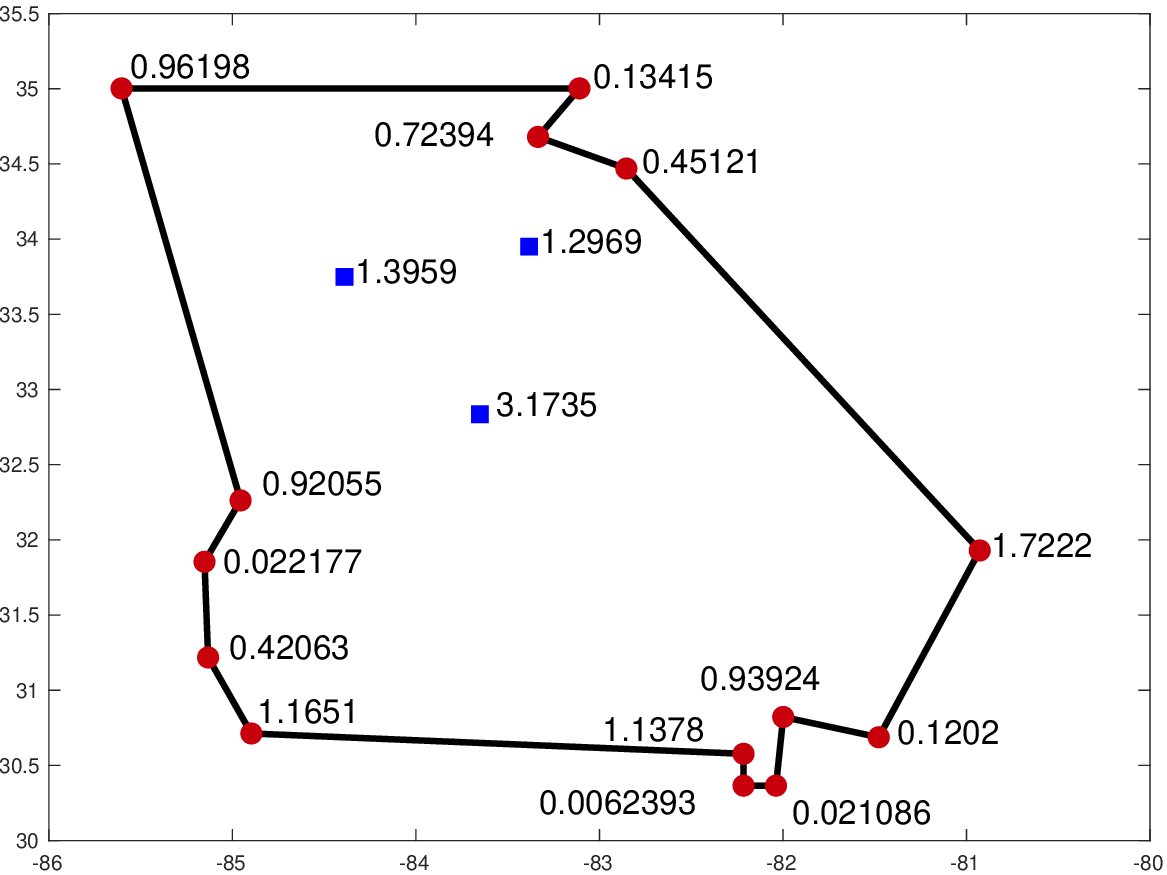}
\includegraphics[width = 0.3\textwidth]{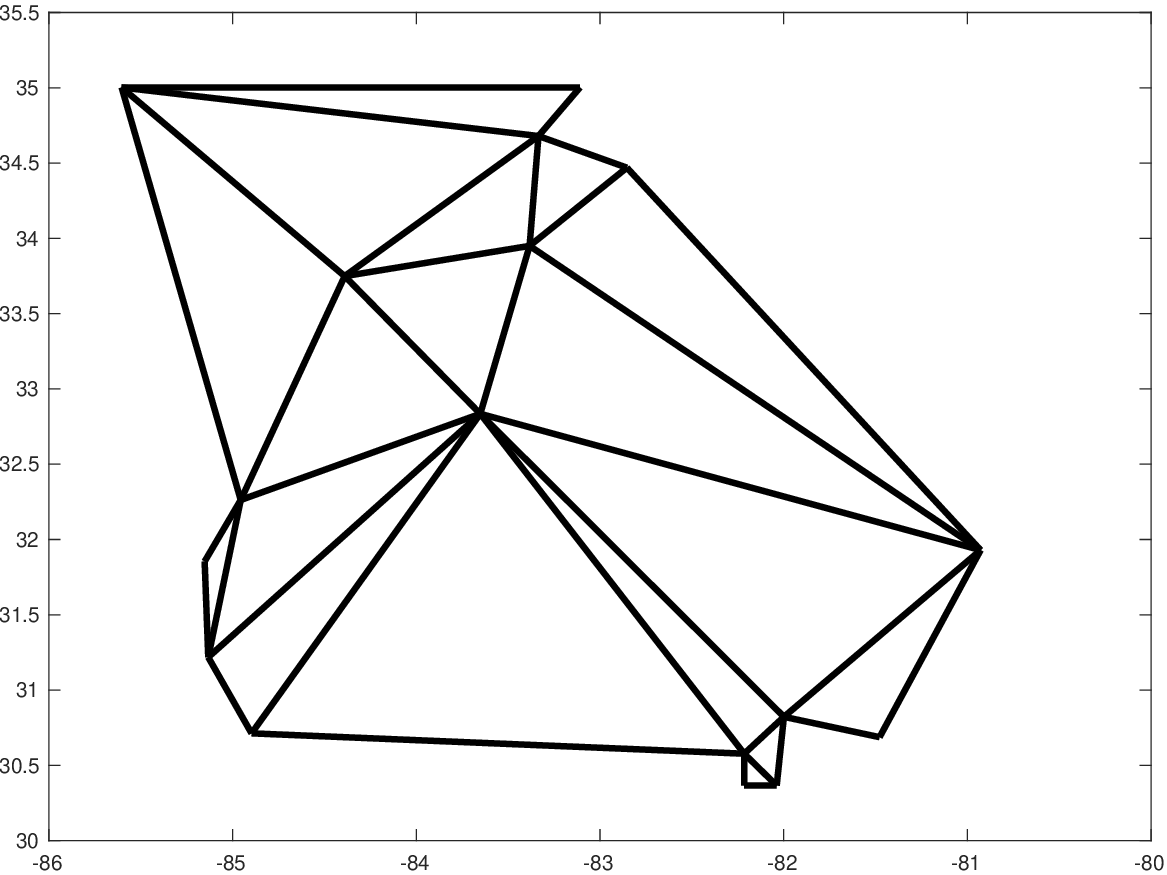}\\
\caption{Locations of quadrature points, corresponding quadrature weights, and triangulation.}\label{fig:GA}
\end{figure}

Errors of the quadrature \eqref{equ:quadP} for the testing integrands are listed in Table \ref{tab:tab2}, indicating that adding more points may reduce the quadrature error, provided the geometry of the triangulation is not compromised.

\begin{table}[htbp]
\centering
\begin{tabular}{c|ccc}
\hline
 & $f_1(x,y)$ & $f_2(x,y)$ &  $f_3(x,y)$ \\\hline
 boundary points & \texttt{4.4366e-04} & \texttt{1.5656e-04}    & \texttt{1.8794e-04} \\\hline
  boundary \& interior points & \texttt{2.2106e-04} &  \texttt{6.3152e-05}   &  \texttt{1.2770e-04}\\\hline 
\end{tabular} 
\caption{Relative error for the quadrature \eqref{equ:quadP} on a polygon.}
\label{tab:tab2}
\end{table}

\section{Final remark}

Given a set of scattered points on a polygon $\Omega\subset\mathbb{R}^2$, we have constructed a positive-weight quadrature rule that establishes the Marcinkiewicz--Zygmund inequalities $1\leq p\leq \infty$. This construction is inspired by our investigation on triangles, where we propose a quadrature rule and examine the related Marcinkiewicz--Zygmund estimate for $d = 1, 3, 5$. Although $d = 1$ suffices for the investigation on polygons, the quadrature rule for triangles offers additional interesting insights. For instance, when $d = 2$, as illustrated in Figure \ref{fig:location}, the weights at the vertices are zero. This corresponds to a toy Gaussian quadrature on triangles: a 3-point quadrature rule that is exact for polynomials of degree at most 2 (see, e.g., \cite{zbMATH03947511}). While the weights of our quadrature rule \eqref{equ:quadT} on triangles may be negative (hence our focus on $d = 1, 3, 5$), out arguments may apply to other positive-weight quadrature rules on triangles for studying Marcinkiewicz--Zygmund inequalities.

\section*{Acknowledgement}

I am very grateful to Professor Ming-Jun Lai for introducing me to splines and for his spline software.

\bibliographystyle{siamplain}
\bibliography{myref}
\clearpage

\end{document}